\documentclass[final,hidelinks,onefignum,onetabnum]{siamart251216}

\usepackage{enumitem}
\usepackage{subfig}
\usepackage{amsmath}
\usepackage{bm}
\usepackage{tikz}
\usetikzlibrary{positioning,shapes.geometric}
\usepackage{appendix}
\usepackage{amsfonts}
\usepackage{multirow}
\usepackage{amssymb}
\usepackage{algorithm,algorithmicx,algpseudocode}
\usepackage{extarrows} 

\usepackage[marginal,hang]{footmisc}

\usepackage{booktabs}
\makeatletter
\g@addto@macro\normalsize{%
	\setlength{\abovedisplayskip}{5.5pt plus 2pt minus 2pt}%
	\setlength{\belowdisplayskip}{5.5pt plus 2pt minus 2pt}%
	\setlength{\abovedisplayshortskip}{2.5pt plus 1pt minus 2pt}%
	\setlength{\belowdisplayshortskip}{2.5pt plus 1pt minus 2pt}%
}
\makeatother
\normalsize

\newsiamremark{remark}{Remark}
\newsiamthm{Def}{Definition}
\newsiamthm{property}{Property}
\newsiamthm{Prop}{Proposition}
\newsiamthm{claim}{Claim}
\newsiamthm{exmp}{Example}
\newsiamthm{assumption}{Assumption}

\ifpdf
\hypersetup{
	pdftitle={},
	pdfauthor={}
}
\fi

\begin{document}

\headers{}{}

\title{{
		Transport Polynomial Exactness for Direct ALE-DG Methods with Nonsmooth mesh motion: A Conservative Coefficient-Evolution Framework}
	\thanks{
		This work was partially supported by National Key R\&D Program of China (Grant No.~2022YFA1004500), Postdoctoral Fellowship Program and China Postdoctoral Science Foundation (Grant No.~BX2026001).}
}

\author{
	Qiqin Cheng\thanks{School of Mathematical Sciences, Xiamen University, Xiamen, Fujian 361005, China (\email{qqcheng@stu.xmu.edu.cn}).}
	\and
	Chaoyi Cai\thanks{Institute of Applied Physics and Computational Mathematics, National Key Laboratory of Computational Physics, 100088, Beijing, China (\email{caichaoyi@stu.xmu.edu.cn}).}
	\and Jianxian Qiu\thanks{Corresponding author. School of Mathematical Sciences and Fujian Provincial Key Laboratory of Mathematical Modeling and High-Performance Scientific Computing, Xiamen University, Xiamen, Fujian 361005, China (\email{jxqiu@xmu.edu.cn}).}
}

\maketitle
\begingroup
\renewcommand\thefootnote{\fnsymbol{footnote}}%
\setcounter{footnote}{0}%
\setlength{\footnotemargin}{1.1em}
\endgroup
\begin{abstract} 
	High-order direct arbitrary Lagrangian--Eulerian discontinuous Galerkin (ALE-DG) methods can suffer severe order reduction under nonsmooth mesh motion despite satisfying the geometric conservation law (GCL). To address this limitation, we propose a new ALE-DG framework applicable to moving triangular and quadrilateral meshes. This framework successfully overcomes the uniform Lipschitz constraint inherent in existing direct ALE-DG methods, retaining high-order accuracy even when the mesh-velocity gradients are unbounded. This capability is driven by transport polynomial exactness (TPE($k$)), which is a structural property stronger than the GCL. TPE($k$) ensures the exact transport of polynomials of degree at most $k$ under arbitrary mesh motion. To incorporate TPE($k$) into a direct ALE-DG method, we introduce a coefficient-evolution (CE) formulation. Reconciling strict local conservation, an indispensable requirement for accurate shock capturing, with the preservation of the TPE(\(k\)) property constitutes the principal technical challenge. We resolve this by evolving the coefficients using the third-order strong-stability-preserving Runge--Kutta method, paired with a conservative correction executed via Simpson's rule integration. Crucially, this correction strictly vanishes for exactly transported quadratic polynomials, thereby ensuring that our third-order scheme rigorously satisfies the TPE($2$) property. Numerical experiments demonstrate that the proposed scheme retains third-order accuracy when the discrete mesh-velocity Lipschitz estimate grows as $\mathcal O(h^{-1})$, whereas a representative existing direct ALE-DG scheme exhibits order reduction under the same mesh motion. Tests with discontinuous solutions further demonstrate robust and high-resolution performance under random and Lagrangian mesh motions.
\end{abstract}

\begin{MSCcodes}
	65M08, 65M50, 65M12, 35L65
\end{MSCcodes}

\begin{keywords}
	arbitrary Lagrangian--Eulerian method, discontinuous Galerkin method, geometric conservation law, coefficient evolution, transport polynomial exactness, conservative correction, nonsmooth mesh motion
\end{keywords}
\section{Introduction}
Hyperbolic conservation laws express a basic physical principle: in the absence of sources, quantities such as mass, momentum, and energy change only through fluxes across the boundary of a control volume. They arise, for example, in compressible gas dynamics, acoustics, shallow water flow, and multi-material flow~\cite{anderson2018high,leveque2002finite}, and generally take the form
\begin{equation}
	\label{eq:MainEq}
	\frac{\partial \bm u}{\partial t}
	+\nabla\cdot\bm F(\bm u)=\bm 0,
	\qquad
	\bm x\in\Omega(t)\subset\mathbb R^d,\quad t>0,
\end{equation}
where $\Omega(t)$ is the computational domain, $d$ is the spatial dimension, $\bm u(\bm x,t)\in\mathbb R^m$ is the vector of conserved variables, and $\bm F(\bm u)\in\mathbb R^{m\times d}$ is the flux. Despite its compact form, equation \eqref{eq:MainEq} can generate smooth waves together with sharply localized and moving structures, including shocks and contact discontinuities~\cite{anderson2018high,leveque2002finite}. A fixed mesh can resolve these structures by using sufficiently small cells throughout the domain, but this may spend most degrees of freedom away from the regions where they are needed. Moving-mesh methods instead redistribute the cells as the solution evolves, with the aim of concentrating resolution near dynamically important features~\cite{Budd_Huang_Russell_2009,tang2003adaptive}. Among them, the arbitrary Lagrangian--Eulerian (ALE) method provides a flexible framework by allowing the mesh velocity to be prescribed independently of the material velocity, thereby bridging the purely Eulerian formulation (fixed mesh) and the fully Lagrangian approach (tracking the material)~\cite{Donea2004Chapter1A,HIRT1974227}.

The ALE methodology encompasses two principal computational frameworks: indirect ALE~\cite{galera2010two} and direct ALE~\cite{boscheri2014direct}. The indirect ALE framework commonly consists of three successive stages: Lagrangian evolution~\cite{CHENG20071567}, mesh rezoning~\cite{BRACKBILL1982342}, and conservative remapping~\cite{dukowicz1987accurate}. In contrast, within the direct ALE framework, the governing equations are discretized and evolved directly on continuously moving cells, so that the numerical solution and mesh geometry evolve simultaneously and no separate remapping step is required~\cite{boscheri2014direct,boscheri2017aledg}. The two routes are complementary, but they maintain high-order accuracy in different ways: an indirect ALE method can control transfer errors through a distinct remapping step, whereas a direct ALE method must preserve compatibility between the evolving mesh geometry and the solution degrees of freedom throughout the coupled evolution. This work focuses on the latter setting.

High-order numerical schemes are particularly important for accurately resolving smooth waves and fine-scale structures with low numerical dissipation. As a fundamental component of such schemes, high-order spatial discretization is essential for achieving their designed order of accuracy. Discontinuous Galerkin (DG) methods are particularly well suited for constructing high-order spatial discretizations within the ALE framework because of their compact stencils, conservation, and flexibility on general meshes~\cite{COCKBURN198990,cockburn1989tvb,reed1973triangular}. These features have motivated extensive research on extending DG methods to moving meshes within the direct ALE framework. Representative high-order direct ALE-DG formulations include, but are not limited to, those developed in \cite{acosta2011moving,boscheri2017aledg,fu2019simplex,klingenberg2017arbitrary,lomtev1999aledg,luo2019quasilagrangian,mavriplis2011geometric,schnucke2020entropy}. Several of these studies developed high-order schemes and demonstrated the expected high-order convergence rates through numerical experiments. More importantly, some of them went further by providing rigorous error analyses that establish high-order convergence. For one-dimensional conservation laws, Klingenberg et~al. established $L^2$ stability and a priori error estimates for the semidiscrete ALE-DG method~\cite{klingenberg2017arbitrary}, while Zhou et~al. extended the analysis to fully discrete schemes coupled with explicit Runge--Kutta time discretizations~\cite{zhou2019stability}. Further one-dimensional studies addressed superconvergence and negative-norm error estimates for linear and nonlinear hyperbolic equations, including problems involving $\delta$-singularities~\cite{hong2020delta,tao2019superconvergence,tao2022negative}. Fu et~al. extended the analysis to two-dimensional conservation laws on moving triangular meshes, establishing $L^2$ stability and an a priori error estimate for the corresponding ALE-DG method~\cite{fu2019simplex}.

Collectively, the error analyses reviewed above have laid an important theoretical foundation for high-order direct ALE-DG schemes on moving meshes~\cite{fu2019simplex,hong2020delta,klingenberg2017arbitrary,tao2019superconvergence,tao2022negative,zhou2019stability}. In these analyses, mesh motion enters the variational and error equations through the mesh velocity and its spatial gradient~\cite{fu2019simplex,klingenberg2017arbitrary}. Most of the corresponding high-order error estimates are established under mesh-motion assumptions that are uniform with respect to the mesh size, requiring these quantities to remain uniformly bounded~\cite{fu2019simplex,hong2020delta,tao2019superconvergence,tao2022negative,zhou2019stability}. For the continuous piecewise-affine mesh velocities adopted in these direct ALE formulations~\cite{fu2019simplex,klingenberg2017arbitrary,zhou2019stability}, the gradient bound amounts to a mesh-independent Lipschitz condition. In practical ALE computations, however, such a uniform bound need not hold. For example, a Lagrangian mesh velocity follows the local material velocity and can therefore vary sharply near evolving shocks or other nonsmooth flow structures. Consequently, its discrete Lipschitz estimate may grow as $\mathcal{O}(h^{-1})$ under mesh refinement.
In such cases, the existing estimates no longer guarantee optimal-order accuracy. More importantly, severe order reduction can already be observed in the simplest possible setting, as demonstrated by the following diagnostic test. We consider the stationary problem
\begin{equation}
	\label{eq}
	\partial_t u=0,
\end{equation}
with the initial condition $u_0(x,y)=1+0.2\sin(2\pi(x+y))$. We compare two third-order direct ALE-DG schemes on the same randomly moving triangular meshes. Scheme 1 denotes the direct ALE-DG scheme developed by Fu et~al.~\cite{fu2019simplex}, with nonlinear limiting disabled, whereas Scheme 2 denotes the scheme proposed in this work; both schemes satisfy the geometric conservation law (GCL). The common mesh motion is driven by a nonsmooth mesh velocity whose discrete Lipschitz estimate scales as $\mathcal{O}(h^{-1})$ under mesh refinement.
\begin{table}[htbp]
	\centering 
	\caption{Comparison of Schemes 1 and 2 for the stationary problem at
		$t=0.1$ on $\Omega(0)=[0,1]\times[0,1]$: $L^\infty$ errors, convergence
		orders, and observed discrete mesh-velocity Lipschitz estimates
		$\mathrm{Lip}(\bm\mu^g)$. The structured triangular meshes are generated
		by dividing each cell of a uniform $N_x\times N_y$ Cartesian partition
		into two triangles.}
	\label{tab:stationary-test}
	\scriptsize
	\begin{tabular}{c c c c c c c}
		\hline
		& $N_x \times N_y$
		& $40\times40$ & $80\times80$ & $160\times160$
		& $320\times320$ & $640\times640$ \\
		\hline
		& $L^\infty$ error
		& 4.2466e-05 & 9.9401e-06 & 3.0270e-06
		& 1.5167e-06 & 7.9498e-07 \\
		Scheme 1 & Order
		& -- & 2.09 & 1.72 & 1.00 & 0.93 \\
		& $\mathrm{Lip}(\bm\mu^g)$
		& 434.84 & 893.74 & 1891.91 & 3976.94 & 8270.39 \\
		\hline
		& $L^\infty$ error
		& 5.5310e-05 & 7.5622e-06 & 9.5602e-07
		& 1.2590e-07 & 1.5991e-08 \\
		Scheme 2 & Order
		& -- & 2.87 & 2.98 & 2.92 & 2.98 \\
		& $\mathrm{Lip}(\bm\mu^g)$
		& 434.84 & 893.74 & 1891.91 & 3976.94 & 8270.39 \\
		\hline
	\end{tabular}
\end{table}
As shown in Table~\ref{tab:stationary-test}, Scheme 1 exhibits progressive order reduction under mesh refinement, with its convergence rate eventually decreasing to approximately first order, whereas Scheme 2 consistently retains third-order accuracy under the same mesh motion. This loss of accuracy cannot be attributed to physical advection, flux nonlinearity, mesh regularity degradation, or limiting procedures. Instead, it reveals a structural limitation that can arise in existing direct ALE-DG schemes: the GCL alone does not prevent mesh-motion-induced order reduction. This observation motivates a stronger structural criterion that extends exactness from constants to higher-degree polynomials.

An analogous polynomial-exactness concept appears in Barth's $k$-exact reconstruction, which recovers polynomials of degree at most $k$ from cell data~\cite{barth1993recent}. This notion, however, concerns spatial reconstruction at a fixed time and does not characterize the polynomial exactness of a fully discrete scheme on an evolving mesh. At the fully discrete level, transport polynomial exactness of order $k$ (TPE($k$)) was first introduced in~\cite{cai2026beyond} for rezoning moving-mesh methods, with related evolved-geometric-moment ideas developed for cut-cell methods in~\cite{cai2025gpr}. A moving-mesh method satisfies TPE($k$) if, for the linear advection equation, it exactly transports every polynomial of degree at most $k$ under arbitrary mesh motion. In particular, TPE($0$) reduces to the GCL, whereas TPE($k$) for $k\geq1$ extends this exactness from constant states to degree-$k$ polynomials. Since a smooth solution is locally represented by its degree-$k$ Taylor polynomial up to a higher-order remainder, exact transport of this polynomial content through TPE($k$) helps suppress lower-order errors induced by mesh motion, thereby providing a mechanism for retaining high-order accuracy when a mesh-independent Lipschitz bound on the mesh velocity is unavailable.

Motivated by this mechanism, the main objective of this work is to incorporate TPE($k$) into a direct ALE-DG framework. Achieving this objective is nontrivial. During direct ALE evolution, the mesh geometry, basis functions, and solution degrees of freedom evolve simultaneously, so exact polynomial transport must be maintained throughout this fully coupled process. At the same time, the scheme must satisfy strict local conservation, which is essential for correctly capturing shock propagation. These two requirements are not automatically compatible: an evolution that preserves the target polynomials may fail to maintain conservative cell totals, whereas enforcing conservation may perturb the polynomial modes and destroy TPE($k$). The main challenge is therefore to construct a fully discrete scheme that enforces conservation for general solutions while leaving the target polynomial solutions unchanged. To resolve this compatibility problem, we develop a new ALE-DG framework. The main contributions of this work are summarized as follows:
\begin{itemize}
	\item \textbf{The first use of TPE($k$) to assess direct ALE methods.}
	To the best of our knowledge, TPE($k$) has not previously been applied in the direct ALE setting. Whereas the GCL only characterizes exact preservation of constant states, TPE($k$) examines the exact transport of polynomials of degree up to $k$ and thereby provides a higher-order structural criterion for diagnosing accuracy loss induced by mesh motion.
	\item \textbf{The direct coefficient-evolution (CE) formulation satisfies the GCL and TPE($k$).}
	We develop a coefficient-evolution ALE-DG (CE-ALE-DG) formulation by deriving evolution equations directly for the DG polynomial coefficients, explicitly accounting for the time dependence of the moving-cell geometry and basis functions. The resulting formulation is proved to satisfy GCL and TPE($k$) without requiring a mesh-independent Lipschitz bound on the mesh velocity.
	\item \textbf{A conservative third-order fully discrete CE scheme is constructed.}
	Within each time step, the local coefficients are evolved using the third-order strong stability preserving Runge--Kutta (SSPRK3) method, while an auxiliary cell total is updated by applying Simpson's rule to the ALE boundary flux using states reconstructed from the SSPRK3 stages. Then, a correction is applied to the provisional coefficient vector so that its cell integral agrees exactly with the conservatively updated total. This coupled construction incorporates local conservation into direct coefficient evolution.
	\item \textbf{Conservation is enforced without altering the GCL or TPE($2$).} The correction vanishes for constant states and exactly transported quadratic polynomials, and therefore leaves the fully discrete GCL and TPE($2$) properties unchanged. For bilinearly mapped quadrilateral cells, a separate quadrature-exactness analysis accounts for the spatially varying Jacobian and establishes the fully discrete GCL and TPE($2$) properties on quadrilateral meshes.
	\item \textbf{Accuracy and robustness under nonsmooth mesh motion.} The numerical results confirm the scheme's conservation, GCL, and TPE($2$) properties and show that the scheme retains third-order accuracy under nonsmooth mesh motion without a uniform Lipschitz bound. Numerical tests involving shock waves further demonstrate stable performance under random and Lagrangian mesh motions. 
\end{itemize}

The remainder of this paper is organized as follows. Section~\ref{sec:Preliminaries} introduces the notation and TPE($k$) property. Section~\ref{sec:CE-ALE-DG} develops the CE-ALE-DG formulation, establishes its GCL and TPE($k$) structures, and constructs a third-order conservative scheme on moving triangular and quadrilateral meshes. Section~\ref{sec:Num_examples} presents numerical tests of its structural properties, accuracy, and robustness. Section~\ref{sec:Conclusion} concludes the paper.
\section{Preliminaries}\label{sec:Preliminaries}
In this section, we introduce the notation, recall the TPE($k$) property, and explain its relevance to accuracy under mesh motion.
\subsection{Notation} For simplicity, we consider the two-dimensional scalar form of \eqref{eq:MainEq} in Sections~\ref{sec:Preliminaries} and \ref{sec:CE-ALE-DG}, where $u(\bm x,t)$ is the scalar conserved variable and $\bm F(u)\in\mathbb R^2$ is the corresponding flux function. 

Let $\mathcal T_h(t)$ be a shape-regular moving mesh of $\Omega(t)$, consisting of nonoverlapping cells $I(t)$. On each cell $I(t)$, let $\{\phi_{I,\ell}(\bm x,t)\}_{\ell=1}^N$ be a basis of
$\mathbb P^k(I(t))$, so that
\[
\mathbb V_I^k(t):=\mathbb P^k(I(t))
=\operatorname{span}\{\phi_{I,1}(\bm x,t),\ldots,
\phi_{I,N}(\bm x,t)\}.
\]
For notational simplicity, we write $\bm\phi=(\phi_1,\ldots,\phi_N)^T$ and $\bm c=(c_1,\ldots,c_N)^T$. The DG solution is then represented as $u_h=\bm\phi^T\bm c$.
\subsection{TPE($k$) property and motivation}
To characterize polynomial transport beyond constant-state preservation under the GCL, we recall the TPE($k$) property introduced in~\cite{cai2026beyond}. Consider the linear advection equation
\begin{equation}\label{eq:linearAdvEquation}
	u_t+\bm a\cdot\nabla u=0,
\end{equation}
where $\bm a = (a_1,a_2)^T$ is a constant vector. If the initial condition is a polynomial $q(\bm x)\in\mathbb P^k$, then the exact solution is $u(\bm x,t)=q(\bm x-\bm a t)$, which remains in $\mathbb P^k$.

Let $\mathcal S^{\Delta t}$, $\mathcal P^n_h$, and $\Phi_h^{\Delta t}$ denote the exact solution operator, the DG projection operator at $t_n$, and the update operator of the linear ALE-DG scheme, respectively. For each moving cell $I(t)$, the local one-step consistency error is defined by $\mathcal D_{h,I}^n u := \left. \left[ \Phi_h^{\Delta t}\bigl(\mathcal P_h^n u(t_n)\bigr) -\mathcal P_h^{n+1}\bigl(\mathcal S^{\Delta t}u(t_n)\bigr) \right] \right|_{I(t_{n+1})}$.
\begin{Def}[Transport polynomial exactness]\label{def:TPE}
	For a nonnegative integer~\(k\), a numerical scheme is said to satisfy the TPE($k$) property if \(\mathcal D_{h,I}^n q=0\) for any moving cell $I(t)$, any $q\in\mathbb P^k$, and every time step $t_n$.
\end{Def}
Since $\mathbb P^0$ consists of constant functions, TPE($0$) coincides with the GCL, whereas TPE($k$) for $k\geq1$ extends this exactness to degree-$k$ polynomials. For a smooth solution, let $P_{k,I}$ and $R_{k+1,I}$ be its degree-$k$ Taylor polynomial and remainder on $I(t_n)$, respectively. If the scheme satisfies TPE($k$), linearity gives
\[
\begin{aligned}
	u&=P_{k,I}+R_{k+1,I},
	\qquad R_{k+1,I}=\mathcal{O}(h_I^{k+1}),\\
	\mathcal D_{h,I}^n u
	&=\mathcal D_{h,I}^nP_{k,I}
	+\mathcal D_{h,I}^nR_{k+1,I}
	=\mathcal D_{h,I}^nR_{k+1,I},
\end{aligned}
\]
where  $h_I$ is the diameter of $I(t_n)$. Thus, only the Taylor remainder contributes to the one-step error. Without TPE($k$), the polynomial part may introduce a lower-order error. This observation motivates the CE-ALE-DG construction developed in the next section.
\section{CE-ALE-DG framework}\label{sec:CE-ALE-DG}  
In this section, we develop a conservative CE-ALE-DG framework by evolving the local polynomial coefficients directly and introducing a compatible correction to enforce conservation. We then construct a third-order fully discrete scheme, establish its conservation, GCL, and TPE($2$) properties, and incorporate a discontinuity treatment.
\subsection{CE formulation} Applying the Reynolds transport theorem to the ALE-DG weak form \cite{fu2019simplex} yields \begin{equation}\label{dUtotdt}
	\frac{d}{dt}\bm U_h^{\mathrm{tot}}
	=
	-\int_{\partial I(t)}
	\bigl(\bm G(u_h,\bm\mu^g)\cdot\bm n\bigr)\bm\phi dS
	+\int_{I(t)}
	\nabla\bm\phi\bm F(u_h)d\bm x
	+\int_{I(t)}
	u_h\bm\phi_td\bm x,
\end{equation}
where $\bm U_h^{\mathrm{tot}}(t):=\int_{I(t)}\bm\phi u_hd\bm x$, $\bm\mu^g$ is the mesh velocity, and $\bm G(u_h,\bm\mu^g):=\bm F(u_h)-u_h\bm\mu^g$ is the ALE flux. On each cell $I(t)$, $\bm U_h^{\mathrm{tot}}(t)$ and $\bm c(t)$ satisfy
\[
\bm U_h^{\mathrm{tot}}(t)=M(t)\bm c(t),
\text{ where }
M(t)=\int_{I(t)}\bm\phi\bm\phi^Td\bm x.
\]
Differentiating \(\bm U_h^{\mathrm{tot}}(t)=M(t)\bm c(t)\) with respect to time and evaluating \(dM/dt\) by the Reynolds transport theorem give
\[
\frac{d\bm c}{dt} =M^{-1}(t)\left( \frac{d\bm U_h^{\mathrm{tot}}}{dt}-\frac{dM}{dt}\bm c \right),
\qquad
\frac{dM}{dt} = \int_{I(t)}(\bm\phi\bm\phi^T)_td\bm x
+\int_{\partial I(t)}(\bm\mu^g\cdot\bm n)
\bm\phi\bm\phi^TdS.
\]
Substituting \eqref{dUtotdt} and $dM/dt$ into $d\bm c/dt$ yields
\begin{equation}
	\begin{aligned}
		\frac{d}{dt}\bm{c}(t) &= M^{-1}(t)\left(-\int_{I(t)}\left(\bm{\phi}^T_t\bm{c}(t)\right)\bm{\phi}d\bm x - \int_{\partial I(t)}\bm{\mu}^{g}\cdot \bm{n}\bm{\phi}\bm{\phi}^T\bm{c}(t)dS\right.\\
		&\left.-\int_{\partial I(t)}\bm{G}(u_h,\bm{\mu}^g)\cdot\bm{n}\bm{\phi}dS
		+ \int_{I(t)}\nabla \bm{\phi}\bm{F}(u_h) d\bm x\right).
	\end{aligned}
	\label{dctdt3}
\end{equation}
We write \eqref{dctdt3} as $d\bm c/dt=\mathcal R(\bm c,t)$. For a static mesh, $\bm\mu^g=0$ and $\bm\phi_t=0$, so \eqref{dctdt3} reduces to the standard DG formulation~\cite{cockburn1989tvb}.

The distinctive feature of \eqref{dctdt3} is that the polynomial coefficients are evolved directly. As shown in the next subsection, this feature leads to the cancellation of the mesh-motion contributions for exactly transported polynomials and is therefore essential to the GCL and TPE($k$) properties.
\subsection{Structural properties of the CE formulation}
To construct a fully discrete scheme that preserves the GCL and TPE($k$), we first establish these properties for \eqref{dctdt3}. This separates the intrinsic structure of the CE formulation from the effects of numerical discretization.
\subsubsection{GCL structure}
\begin{proposition}
	If $u_h(\cdot,t_n)=\bm\phi^T(\cdot,t_n)\bm c(t_n)\equiv C$, then the solution of \eqref{dctdt3} satisfies $u_h(\cdot,t)\equiv C$ for $t\in[t_n,t_{n+1}]$.
\end{proposition}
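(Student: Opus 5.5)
Since \eqref{dctdt3} is an ODE system $d\bm c/dt=\mathcal R(\bm c,t)$, the plan is to exhibit a candidate solution that represents $C$ at every time and appeal to uniqueness. For each $t\in[t_n,t_{n+1}]$, let $\tilde{\bm c}(t)$ be the unique coefficient vector with $\bm\phi^T(\cdot,t)\tilde{\bm c}(t)\equiv C$ on $I(t)$. It exists because constants lie in $\mathbb P^k(I(t))=\operatorname{span}\{\phi_\ell(\cdot,t)\}$, and it is unique because $M(t)$ is invertible. Concretely, $\tilde{\bm c}(t)=M^{-1}(t)\int_{I(t)}C\bm\phi\,d\bm x$, which is differentiable in $t$ whenever the mesh motion and the basis are. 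I will show that $\tilde{\bm c}$ satisfies \eqref{dctdt3}. Since $\tilde{\bm c}(t_n)=\bm c(t_n)$, local Lipschitz continuity of $\mathcal R$ in $\bm c$ (given a Lipschitz numerical flux and smooth $\bm F$) then yields $\bm c\equiv\tilde{\bm c}$, hence $u_h\equiv C$.

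To verify that $\tilde{\bm c}$ is a solution, I would differentiate the identity $\bm\phi^T(\bm x,t)\tilde{\bm c}(t)=C$ in $t$ at a fixed $\bm x\in I(t)$. This is valid for $\bm x$ interior to the moving cell over a short time interval, and it gives $\bm\phi^T\tilde{\bm c}'=-\bm\phi_t^T\tilde{\bm c}$. Multiplying by $\bm\phi$ and integrating over $I(t)$ gives
\[
M(t)\tilde{\bm c}'(t)=-\int_{I(t)}(\bm\phi_t^T\tilde{\bm c})\bm\phi\,d\bm x ,
\]
which is exactly the first term on the right-hand side of \eqref{dctdt3}. It therefore suffices to show that the remaining three terms sum to zero when $u_h\equiv C$.

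The boundary terms combine as follows. Consistency of the numerical ALE flux gives $\bm G(C,\bm\mu^g)=\bm F(C)-C\bm\mu^g$, and the mesh-velocity term equals $-\int_{\partial I}(\bm\mu^g\cdot\bm n)\bm\phi\, C\,dS$. The $C\bm\mu^g$ contributions cancel, leaving
\[
-\int_{\partial I(t)}\bm F(C)\cdot\bm n\,\bm\phi\,dS+\int_{I(t)}\nabla\bm\phi\,\bm F(C)\,d\bm x .
\]
Since $\bm F(C)$ is a constant vector, the divergence theorem applied to $\bm\phi\,\bm F(C)$ shows that this is zero.

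The main obstacle is not the algebra. It is making the pointwise time-differentiation rigorous on a moving domain, and being explicit about the assumptions used: numerical-flux consistency (the interface traces on both sides equal $C$, assuming the neighboring cells are also constant) and enough regularity of $\phi(\bm x,t)$ to justify $\phi_t$ and uniqueness of the ODE solution. If the neighbors are not constant, the argument is local only in the sense of the whole mesh: one must start from the assumption $u_h\equiv C$ globally. I would state it that way and apply uniqueness to the global coupled system.
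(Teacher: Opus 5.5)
Your proof is correct and follows essentially the same route as the paper, which also takes the candidate $\widehat{\bm c}(t)=C\bm\alpha(t)$ with $\bm\phi^T\bm\alpha=1$. The paper likewise cancels the mesh-velocity terms, removes the flux terms by the divergence theorem, identifies the leftover term $-CM^{-1}\int_{I(t)}(\bm\phi_t^T\bm\alpha)\bm\phi\,d\bm x$ with $d\widehat{\bm c}/dt$ by differentiating $\bm\phi^T\bm\alpha=1$, and concludes by ODE uniqueness.

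Two of the obstacles you raise disappear. First, \eqref{dctdt3} contains the exact ALE flux $\bm G(u_h,\bm\mu^g)$ rather than a numerical flux, so $\bm G(C,\bm\mu^g)=\bm F(C)-C\bm\mu^g$ holds by definition and the argument is cell-local, with no assumption on neighboring cells. Second, $\bm\phi^T(\cdot,t)\tilde{\bm c}(t)-C$ is a polynomial in $\bm x$ that vanishes on $I(t)$, hence on all of $\mathbb R^2$, so differentiating in $t$ at fixed $\bm x$ needs no care about the moving domain.
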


\begin{proof}
	Let $\bm\alpha(t)$ be defined by
	\begin{equation}\label{eq:phiAlphaEqOne}
		\bm\phi^T(x,y,t)\bm\alpha(t)=1,
	\end{equation}
	and set $\widehat{\bm c}(t)=C\bm\alpha(t)$, which represents the constant state $C$ in the basis $\bm\phi$.
	
	We verify that \(\widehat{\bm c}(t)\) solves \eqref{dctdt3}. Evaluating the right-hand side of \eqref{dctdt3} at \(\widehat{\bm c}(t)\), and using \(\bm\phi^T\widehat{\bm c}=C\) and \(\bm G(C,\bm\mu^g)=\bm F(C)-C\bm\mu^g\), gives
	\[
	\begin{aligned}
		\mathcal R(\widehat{\bm c},t)
		={}&M^{-1}(t)\Bigg[
		-C\int_{I(t)}(\bm\phi_t^T\bm\alpha)\bm\phi d\bm x-C\int_{\partial I(t)}(\bm\mu^g\cdot\bm n)\bm\phi dS\\
		&-\int_{\partial I(t)}
		(\bm F(C)-C\bm\mu^g)\cdot\bm n\bm\phi dS+\int_{I(t)}\nabla\bm\phi \bm F(C) d\bm x\Bigg]\\
		=&-CM^{-1}(t)\int_{I(t)}
		(\bm\phi_t^T\bm\alpha)\bm\phi d\bm x\\
		&+M^{-1}(t)\Bigg[
		-\int_{\partial I(t)}(\bm F(C)\cdot\bm n)\bm\phi dS
		+\int_{I(t)}\nabla\bm\phi \bm F(C) d\bm x\Bigg].
	\end{aligned}
	\]
	Because $\bm F(C)$ is constant, the divergence theorem gives
	\[
	\int_{I(t)}\nabla\bm\phi\bm F(C)d\bm x
	=\int_{\partial I(t)}(\bm F(C)\cdot\bm n)\bm\phi dS.
	\]
	Therefore,
	\[
	\mathcal R(\widehat{\bm c},t)
	=-CM^{-1}(t)\int_{I(t)}
	(\bm\phi_t^T\bm\alpha)\bm\phi d\bm x.
	\]
	
	Differentiating \eqref{eq:phiAlphaEqOne} gives
	$-\bm\phi_t^T\bm\alpha=\bm\phi^T\bm\alpha'$, and therefore
	\[
	\mathcal R(\widehat{\bm c},t)
	=CM^{-1}(t)M(t)\bm\alpha'(t)
	=\frac{d\widehat{\bm c}}{dt}.
	\]
	Since $\widehat{\bm c}(t_n)=\bm c(t_n)$, uniqueness of the coefficient ordinary differential equation (ODE) yields $\bm c(t)=C\bm\alpha(t)$ and hence $u_h(\cdot,t)\equiv C$.
\end{proof}

\subsubsection{\texorpdfstring{TPE$(k)$}{TPE(k)} structure}
In this part, we consider the linear advection equation \eqref{eq:linearAdvEquation}.
Let $\bm m(x,y)$ be a fixed vector whose components form a basis of $\mathbb P^k$. Since $\mathbb P^k\subseteq\mathbb V_I^k(t)$, there exists a unique matrix $T(t)$ such that $\bm\phi^T(x,y,t)T(t)=\bm m^T(x,y).$ Moreover, every $u(\cdot,t)\in\mathbb P^k$ admits a unique representation $u(x,y,t)=\bm m^T(x,y)\bm d(t)$.
\begin{lemma}\label{lem:NablaMAndA}
	There exists a constant matrix $A$ such that
	$-\bm a\cdot\nabla\bm m^T=\bm m^TA$. Moreover, a polynomial solution of \eqref{eq:linearAdvEquation} satisfies $\bm d'(t)=A\bm d(t)$.
\end{lemma}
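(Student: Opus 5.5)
The plan is to prove the two claims in order: first construct $A$ from the invariance of $\mathbb P^k$ under differentiation, then obtain the coefficient ODE by substituting the polynomial ansatz into the advection equation and using linear independence of the basis $\bm m$.

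\emph{Existence of $A$.} For each component $m_j$ of $\bm m$, the polynomial $-\bm a\cdot\nabla m_j=-(a_1\partial_x m_j+a_2\partial_y m_j)$ has degree at most $k-1\le k$, so it lies in $\mathbb P^k$. Since the components of $\bm m$ form a basis of $\mathbb P^k$, there is a unique coefficient vector $\bm A_j$ with $-\bm a\cdot\nabla m_j=\bm m^T\bm A_j$. Set $A=(\bm A_1,\ldots,\bm A_N)$; this matrix depends only on $\bm a$ and on the fixed basis $\bm m$, so it is constant in $t$ and in $(x,y)$. Collecting the columns gives $-\bm a\cdot\nabla\bm m^T=\bm m^TA$. (If $k=0$, then $A=0$.)

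\emph{Coefficient ODE.} Let $u(x,y,t)=\bm m^T(x,y)\bm d(t)$ be a polynomial solution of \eqref{eq:linearAdvEquation}; the exact solution $q(\bm x-\bm at)$ is of this form with $\bm d$ smooth in $t$. Since $\bm m$ does not depend on $t$, we have $u_t=\bm m^T\bm d'(t)$. Also $\bm a\cdot\nabla u=(\bm a\cdot\nabla\bm m^T)\bm d=-\bm m^TA\bm d$. Substituting into $u_t+\bm a\cdot\nabla u=0$ yields
\[
\bm m^T(x,y)\bigl(\bm d'(t)-A\bm d(t)\bigr)=0\quad\text{for all }(x,y).
\]
Linear independence of the components of $\bm m$ as functions on $I(t)$, which is an open set, then forces $\bm d'(t)=A\bm d(t)$.

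\emph{Main obstacle.} The argument is essentially routine. The only delicate points are that the derivative maps $\mathbb P^k$ into itself, so that $A$ exists, and that the uniqueness of the representation $u=\bm m^T\bm d$ makes both $A$ and the ODE well defined. Both follow because $\bm m$ is a basis of $\mathbb P^k$ that is independent of $t$.
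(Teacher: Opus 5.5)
Your proof is correct and follows the paper's argument: you obtain $A$ from the fact that $-\bm a\cdot\nabla$ maps $\mathbb P^k$ into $\mathbb P^{k-1}\subset\mathbb P^k$, then substitute $u=\bm m^T\bm d$ into \eqref{eq:linearAdvEquation} and use linear independence of the components of $\bm m$ to get $\bm d'(t)=A\bm d(t)$. Your column-by-column construction of $A$ and the remark that $A=0$ when $k=0$ are only added detail.
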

\begin{proof}
	The operator $-\bm a\cdot\nabla$ maps $\mathbb P^k$ into $\mathbb P^{k-1}\subset\mathbb P^k$, which defines $A$. Substitution of $u=\bm m^T\bm d$ into \eqref{eq:linearAdvEquation} and linear independence of the components of $\bm m$ give $\bm d'(t)=A\bm d(t)$.
\end{proof}
\begin{proposition}
	Let $q\in\mathbb P^k$. If
	$u_h(x,y,t_n)=q(x-a_1t_n,y-a_2t_n)$, then the solution of \eqref{dctdt3} satisfies
	$u_h(x,y,t)=q(x-a_1t,y-a_2t)$ for $t\in[t_n,t_{n+1}]$.
\end{proposition}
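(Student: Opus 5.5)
We mimic the proof of the GCL proposition and construct the candidate coefficient trajectory explicitly. Write the exact transported polynomial as $q(x-a_1t,y-a_2t)=\bm m^T(x,y)\bm d(t)$, where $\bm d$ solves $\bm d'=A\bm d$ by Lemma~\ref{lem:NablaMAndA}. Set $\widehat{\bm c}(t):=T(t)\bm d(t)$, so that $\bm\phi^T\widehat{\bm c}=\bm m^T\bm d$ is exactly the transported polynomial. Since $\widehat{\bm c}(t_n)=\bm c(t_n)$ (the basis representation is unique), uniqueness for the linear coefficient ODE \eqref{dctdt3} reduces the claim to verifying $\mathcal R(\widehat{\bm c},t)=\widehat{\bm c}'(t)$.

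First I evaluate $\mathcal R(\widehat{\bm c},t)$ with $\bm F(u)=\bm a u$ and $\widehat u:=\bm\phi^T\widehat{\bm c}$. The ALE flux is $\bm G=\widehat u(\bm a-\bm\mu^g)$, so its $-\bm\mu^g$ part exactly cancels the term $-\int_{\partial I}(\bm\mu^g\cdot\bm n)\bm\phi\widehat u\,dS$, just as in the GCL proof. What remains is
\[
-\int_{\partial I(t)}(\bm a\cdot\bm n)\widehat u\,\bm\phi\,dS+\int_{I(t)}\widehat u\,(\bm a\cdot\nabla\bm\phi)\,d\bm x .
\]
Integrating by parts (divergence theorem on $\nabla\cdot(\bm a\,\widehat u\,\bm\phi)$) turns this into
\[
-\int_{I(t)}(\bm a\cdot\nabla\widehat u)\bm\phi\,d\bm x=\int_{I(t)}\bm\phi\,\bm m^TA\bm d\,d\bm x .
\]
Here we use $\widehat u=\bm m^T\bm d$ together with Lemma~\ref{lem:NablaMAndA}. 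Hence
\[
\mathcal R(\widehat{\bm c},t)=M^{-1}\Big(-\int_{I(t)}(\bm\phi_t^T\widehat{\bm c})\bm\phi\,d\bm x+\int_{I(t)}\bm\phi\,\bm m^TA\bm d\,d\bm x\Big).
\]

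Next I differentiate the identity $\bm\phi^T(x,y,t)T(t)=\bm m^T(x,y)$ in time. Because $\bm m$ does not depend on $t$, this gives $\bm\phi_t^TT=-\bm\phi^TT'$, and therefore $\bm\phi_t^T\widehat{\bm c}=-\bm\phi^TT'\bm d$. I also rewrite $\bm m^TA\bm d=\bm\phi^TTA\bm d$. Substituting both expressions, each integrand takes the form $\bm\phi\bm\phi^T(\cdot)$, so
\[
\mathcal R(\widehat{\bm c},t)=M^{-1}M\,(T'\bm d+TA\bm d)=T'\bm d+T\bm d'=\widehat{\bm c}'(t),
\]
where the middle step uses $\bm d'=A\bm d$. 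This completes the verification.

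The main point requiring care is the integration-by-parts step. It relies on $\bm a$ being constant and on $\widehat u\bm\phi$ being smooth on the closed cell, so the boundary and volume terms combine exactly. A second point is justifying uniqueness for the ODE: \eqref{dctdt3} is linear in $\bm c$ with continuous coefficients for linear advection, provided $M(t)$ stays invertible, which holds for nondegenerate moving cells. Differentiability of $T(t)$ follows from that of $\bm\phi$ in $t$, via $T=M^{-1}\int_{I(t)}\bm\phi\bm m^T\,d\bm x$.
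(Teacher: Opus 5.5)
Your proposal is correct and follows the paper's proof essentially step for step. Both use the candidate $\widehat{\bm c}=T\bm d$, the cancellation of the mesh-velocity terms, integration by parts, the identities $-\bm\phi_t^TT=\bm\phi^TT'$ and $-\bm a\cdot\nabla u=\bm\phi^TTA\bm d$, and ODE uniqueness; your remarks on the invertibility of $M(t)$ and the smoothness of $T(t)=M^{-1}\int_{I(t)}\bm\phi\bm m^T\,d\bm x$ only make explicit what the paper leaves implicit.
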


\begin{proof}
	Set $\widehat{\bm c}(t)=T(t)\bm d(t)$, so that
	$\bm\phi^T\widehat{\bm c}=\bm m^T\bm d=u$. For \eqref{eq:linearAdvEquation},
	$\bm F(u)=\bm a u$ and $\bm G(u,\bm\mu^g)=u(\bm a-\bm\mu^g)$.
	Substitution into \eqref{dctdt3} gives
	\[
	\begin{aligned}
		\mathcal R(\widehat{\bm c},t)
		={}&M^{-1}(t)\Bigg[
		-\int_{I(t)}(\bm\phi_t^TT\bm d)\bm\phi d\bm x-\int_{\partial I(t)}u(\bm\mu^g\cdot\bm n)\bm\phi dS\\
		&-\int_{\partial I(t)}u(\bm a-\bm\mu^g)\cdot\bm n \bm\phi dS+\int_{I(t)}\nabla\bm\phi\bm a u d\bm x\Bigg]\\
		={}&M^{-1}(t)\Bigg[
		-\int_{I(t)}(\bm\phi_t^TT\bm d)\bm\phi d\bm x-\int_{\partial I(t)}u(\bm a\cdot\bm n)\bm\phi dS
		+\int_{I(t)}\nabla\bm\phi\bm a u d\bm x\Bigg].
	\end{aligned}
	\]
	The mesh-velocity terms therefore cancel. Integration by parts gives
	\[
	\int_{I(t)}\nabla\bm\phi\,\bm a u d\bm x
	-\int_{\partial I(t)}u(\bm a\cdot\bm n)\bm\phi dS
	=-\int_{I(t)}\bm\phi(\bm a\cdot\nabla u)d\bm x,
	\]
	and hence
	\[
	\mathcal R(\widehat{\bm c},t)
	=M^{-1}(t)\left[
	-\int_{I(t)}(\bm\phi_t^TT\bm d)\bm\phi d\bm x
	-\int_{I(t)}\bm\phi\,\bm a\cdot\nabla u d\bm x
	\right].
	\]
	Since $\bm\phi^TT=\bm m^T$ and
	$-\bm a\cdot\nabla\bm m^T=\bm m^TA$,
	\[
	-\bm\phi_t^TT=\bm\phi^TT',
	\qquad
	-\bm a\cdot\nabla u=\bm\phi^TTA\bm d.
	\]
	Consequently,
	\[
	\mathcal R(\widehat{\bm c},t)
	=T'(t)\bm d(t)+T(t)A\bm d(t)
	=\frac{d}{dt}\bigl(T(t)\bm d(t)\bigr).
	\]
	Since the candidate and actual coefficient vectors have the same initial data, ODE uniqueness gives $\bm c(t)=T(t)\bm d(t)$ and therefore $u_h(\cdot,t)=u(\cdot,t)$.
\end{proof}

The preceding results show that \eqref{dctdt3} preserves constant states and exactly transports every polynomial in $\mathbb P^k$. The following subsection develops a fully discrete scheme that preserves both properties.

\subsection{Third-order conservative scheme construction}
Based on the GCL and TPE($k$) structures established above, we now construct a third-order conservative fully discrete scheme applicable to both moving triangular and quadrilateral meshes and prove its GCL and TPE($2$) properties. Throughout each time interval $[t_n,t_{n+1}]$, every mesh vertex is assumed to move along a trajectory that is affine in time. The resulting cells are assumed to remain nondegenerate and shape-regular throughout the time step.
\subsubsection{Choice of basis functions}\label{FrozenBasis}
To simplify the coefficient evolution on a moving cell, we freeze a local physical Taylor basis within each time step, which eliminates $\phi_t$ in \eqref{dctdt3} while retaining the same polynomial approximation space. For $t \in [t_n,t_{n+1}]$, define
\begin{equation*}
	X_I(t_n)=\frac{x-x_I(t_n)}{\sqrt{|I(t_n)|}},\qquad Y_I(t_n)=\frac{y-y_I(t_n)}{\sqrt{|I(t_n)|}},
\end{equation*}
where $\bm x_I(t_n)=(x_I(t_n),y_I(t_n))$ is the cell centroid. For the DG space $\mathbb V_I^2(t)$, the local basis is ordered as
\[
\bm\phi(t_n)
=
\left(
1,\,
X_I(t_n),\,
Y_I(t_n),\,
X_I^2(t_n),\,
X_I(t_n)Y_I(t_n),\,
Y_I^2(t_n)
\right)^T.
\]
\textbf{Basis transformation.} The coefficient vector $\widetilde{\bm c}(t_{n+1})$ is then transformed from \(\bm\phi(t_n)\) to \(\bm\phi(t_{n+1})\) at \(t_{n+1}\). Define the cell-scale ratio and the normalized centroid shifts by
\begin{equation*}
	s_I = \sqrt{\frac{|I(t_{n+1})|}{|I(t_{n})|}}, \quad 
	d_x = \frac{x_I(t_{n+1}) - x_I(t_{n})}{\sqrt{|I(t_{n})|}}, \quad 
	d_y = \frac{y_I(t_{n+1}) - y_I(t_{n})}{\sqrt{|I(t_{n})|}}.
\end{equation*}

Since $\bm\phi(t_n)$ and $\bm\phi(t_{n+1})$ span the same DG space,
$\bm\phi^T(t_n)=\bm\phi^T(t_{n+1})P^T$, where
\begin{equation*}
	P^T = 
	\begin{pmatrix}
		1 & d_x & d_y & d_x^2 & d_x d_y & d_y^2 \\
		0 & s_I   & 0   & 2s_I d_x & s_I d_y & 0 \\
		0 & 0   & s_I   & 0     & s_I d_x & 2s_I d_y \\
		0 & 0   & 0   & s_I^2   & 0     & 0 \\
		0 & 0   & 0   & 0     & s_I^2   & 0 \\
		0 & 0   & 0   & 0     & 0     & s_I^2
	\end{pmatrix}.
\end{equation*}

Thus, the provisional coefficients in the frozen basis are transformed to the new basis by
\begin{equation}\label{eq:basis_update}
	\bm c(t_{n+1})
	=
	P^T\widetilde{\bm c}(t_{n+1}).
\end{equation}
This transformation leaves the represented polynomial \(u_h\) unchanged.
\subsubsection{Scheme discretization}
Before presenting the discrete scheme, we introduce the boundary quadrature operator \(\mathcal Q_{\partial I(t)}\) and the volume quadrature operator \(\mathcal Q_{I(t)}\). Let $\{\omega_q^e\}_{q=1}^{G_e}$ and $\{\omega_q^I\}_{q=1}^{G_I}$ denote the normalized edge quadrature weights and the reference-element volume quadrature weights, respectively. For vector-valued $\bm f(u)$,
\begin{equation*}
	\begin{aligned}
		\int_{\partial I(t)}\bm f(u)\,dS
		&\approx
		\mathcal Q_{\partial I(t)}(\bm f(u))
		=
		\sum_l|\partial I_l(t)|
		\sum_{q=1}^{G_e}
		\omega_q^e\bm f(u_{l,q}),\\
		\int_{I(t)}\bm f(u)d\bm x
		&\approx
		\mathcal Q_{I(t)}(\bm f(u))
		=
		\sum_{q=1}^{G_I}
		\omega_q^I|J|_q\bm f(u_q),
	\end{aligned}
\end{equation*}
where $G_e$ and $G_I$ are the numbers of edge and cell quadrature points, and $|J|_q$ is the Jacobian determinant of the element mapping. We use four-point Gauss--Lobatto edge quadrature, six-point triangular quadrature~\cite{dunavant1985high}, and $4\times4$ tensor-product Gauss--Lobatto quadrature on quadrilaterals. These rules exactly integrate the polynomial terms considered in Lemma \ref{lem:QuadExact}.

Then, the spatial discretization of \eqref{dctdt3} is given by
\begin{equation}
	\label{semidis}
	\begin{aligned}
		\frac{d}{dt}\bm{c}(t) &= M^{-1}(t)\left(-\mathcal{Q}_{\partial I(t)}\left(\bm{\mu}^{g}\cdot \bm{n}\bm{\phi}\bm{\phi}^T\bm{c}(t)\right) \right.\\
		&\left.-\mathcal{Q}_{\partial I(t)}\left(\widehat{g}(u^{-}_{h},u^{+}_{h};\bm{\mu}^g,\bm{n})\bm{\phi}\right) + \mathcal{Q}_{I(t)}\left(\nabla \bm{\phi}\bm{F}(u_h) \right)\right).
	\end{aligned}
\end{equation}
The right-hand side of \eqref{semidis} is denoted by $\mathcal R_{I,0}(M(t),\bm c(t))$.

The ALE local Lax--Friedrichs numerical flux~\cite{fu2019simplex} is defined by
\begin{equation*}
	\widehat{g}_{lk}(u^{-}_{h},u^{+}_{h};\bm{\mu}^g,\bm{n}) = \frac{1}{2}\left(\bm{G}(u^{-}_{h,lk},\bm{\mu}_{lk}^g)+\bm{G}(u^{+}_{h,lk},\bm{\mu}_{lk}^g)\right)\cdot\bm{n}_{l}-\frac{1}{2}\alpha_{l}\left(u^{+}_{h,lk}-u^{-}_{h,lk}\right),
\end{equation*}
where $\alpha_l$, the maximum spectral radius of the ALE normal Jacobian on edge $l$, is defined by
\begin{equation*}
	\alpha_l=\max_{u\in\{u_h^-,u_h^+\}}
	r\left(\bm{F}_u(u)\cdot\bm n_{l}-(\bm\mu^g\cdot\bm n_l)\mathbf{I}\right),
\end{equation*}
and $r(\cdot)$ denotes the spectral radius. Here $\mathbf{I}$ is the identity matrix.

We next discretize the semidiscrete system \eqref{semidis} in time using the SSPRK3 method~\cite{shu1988efficient}:
\begin{equation}
	\label{disScheme}
	\begin{aligned}
		\bm{c}(t^{(0)}) &= \bm{c}(t_{n}),\\
		\bm{c}(t^{(1)}) &= \bm{c}(t^{(0)}) + \Delta t\mathcal{R}_{I,0}\left(M(t^{(0)}),\bm{c}(t^{(0)})\right),\\
		\bm{c}(t^{(2)}) &= \frac{3}{4}\bm{c}(t^{(0)}) +\frac{1}{4}\left[\bm{c}(t^{(1)}) + \Delta t\mathcal{R}_{I,0}\left(M(t^{(1)}),\bm{c}(t^{(1)})\right)\right],\\
		\bm{c}(t^{(3)}) &= \frac{1}{3}\bm{c}(t^{(0)}) +\frac{2}{3}\left[\bm{c}(t^{(2)}) + \Delta t\mathcal{R}_{I,0}\left(M(t^{(2)}),\bm{c}(t^{(2)})\right)\right],\\
		\widetilde{\bm c}(t_{n+1})&=\bm c(t^{(3)}).
	\end{aligned}
\end{equation}
\begin{remark}
	The SSPRK3 geometry stage times are
	\[
	t^{(0)}=t_n,\qquad
	t^{(1)}=t_{n+1},\qquad
	t^{(2)}=t_{n+\frac12},\qquad
	t^{(3)}=t_{n+1}.
	\]
	Geometry-dependent quantities are evaluated at their corresponding stage times. The vectors $\bm c(t^{(1)})$ and $\bm c(t^{(2)})$ are SSPRK3 stage values, whereas $\bm c(t^{(3)})$ is the final approximation at $t_{n+1}$.
\end{remark}

A key issue with \eqref{disScheme} is that conservation is not automatically guaranteed. Therefore, a conservative correction procedure is designed to restore conservation.
\subsubsection{Conservative correction}
The correction consists of two steps: evolving an auxiliary conservative cell total and adjusting the provisional coefficient vector so that its cell integral matches this total. 

First, the auxiliary cell total is initialized by
$\mathcal U_I^\star(t_n)=\int_{I(t_n)}u_hd\bm x$. The auxiliary total is then evolved according to the conservation law on the moving cell $I(t)$
\begin{equation}
	\label{eq:SDConservedODE}
	\frac{d}{dt}\mathcal U^{\star}_I(t) = -\mathcal{Q}_{\partial I(t)} \left( \widehat g(u_h^-,u_h^+;\bm{\mu}^g,\bm{n}) \right):=\mathcal{R}_{I,1}(\bm{c}(t),t).
\end{equation}
A Simpson-type approximation over $[t_n,t_{n+1}]$ gives
\begin{equation}\label{eq:FDConservedODE}
	\begin{aligned}
		\mathcal U^{\star}_I(t_{n+1}) &= \mathcal U^{\star}_I(t_n)+
		\frac{\Delta t}{6}
		\left[
		\mathcal R_{I,1}(\bm{c}^{E}(t^{(0)}),t_{n})\right.\\
		&\left.+
		4\mathcal R_{I,1}(\bm{c}^{E}(t^{(2)}),t_{n+\frac{1}{2}})
		+
		\mathcal R_{I,1}(\bm{c}^{E}(t^{(1)}),t_{n+1})
		\right],
	\end{aligned}
\end{equation}
where
\begin{equation*}
	\begin{aligned}
		\bm{c}^{E}(t^{(0)})&=\bm{c}(t^{(0)}),\bm{c}^{E}(t^{(1)})=\bm{c}(t^{(3)}),\\
		\bm{c}^{E}(t^{(2)})&=\frac14\bm{c}(t^{(0)})+\frac14\bm{c}(t^{(1)})+\frac12 \bm{c}(t^{(2)}).
	\end{aligned}
\end{equation*}
$\bm{c}^{E}(t^{(2)})$ is an approximation to the coefficient
vector at $t_{n+\frac12}$, while $\bm{c}^{E}(t^{(0)})$ and $\bm{c}^{E}(t^{(1)})$ correspond to $t_n$ and $t_{n+1}$, respectively.

\textbf{Correction.}
After the SSPRK3 update and the basis transformation, the provisional
cell total at $t_{n+1}$ is computed as
\[
\widetilde{\mathcal U}_I(t_{n+1})
=
\int_{I(t_{n+1})}
\bm\phi^T(t_{n+1})\bm c(t_{n+1})d\bm x
=
\bm p_I^T(t_{n+1})\bm c(t_{n+1}),
\]
where $\bm{p}_I(t):=\int_{I(t)}\bm{\phi}(t)d\bm x=\mathcal{Q}_{I(t)}(\bm{\phi}(t)).$

The corrected coefficient vector is defined by
\begin{equation}
	\label{coefConservedUpdate}
	\bm{c}_{Cor}(t_{n+1}) := \bm{c}(t_{n+1})+\lambda_I\bm{\alpha}(t_{n+1}),
\end{equation}
where
\begin{equation*}
	\lambda_I =
	\frac
	{
		\mathcal U_{I}^{\star}(t_{n+1})-\widetilde{\mathcal U}_I(t_{n+1})
	}
	{
		\bm{p}^T_I(t_{n+1})\bm{\alpha}(t_{n+1})
	}.
\end{equation*}
Here $\bm\alpha(t_{n+1})$ is the vector that satisfies $\bm\phi^T(t_{n+1})\bm\alpha(t_{n+1})=1$.
Finally, $\bm c_{Cor}(t_{n+1})$ is taken as the initial coefficient vector for the next time step.
\subsection{Properties of the fully discrete scheme}
The preceding subsection completes the construction of the CE-ALE-DG scheme. In this subsection, we analyze its main discrete properties,
including the conservation, GCL, and TPE($2$) properties.
\subsubsection{Conservation property}
\begin{lemma}\label{lem:correctionTotal}
	If $\bm{c}_{Cor}(t_{n+1})$ is defined by \eqref{coefConservedUpdate}, then 
	\begin{equation*}
		\int_{I(t_{n+1})} u^{Cor}_{h}(t_{n+1}) d\bm x = \mathcal U_I^\star(t_{n+1}),
	\end{equation*}
	where $u^{Cor}_{h}(t_{n+1}):=\bm{\phi}^T\bm{c}_{Cor}(t_{n+1}).$
\end{lemma}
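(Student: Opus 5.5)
The plan is a direct linear computation. The cell integral is a linear functional of the coefficient vector, and $\lambda_I$ is chosen exactly to close the gap. The only delicate point is that the integral in the statement must be the same functional $\bm p_I^T(t_{n+1})\,\cdot$ that appears in $\widetilde{\mathcal U}_I(t_{n+1})$ and in $\lambda_I$.

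First, I will write $u^{Cor}_h(t_{n+1})=\bm\phi^T(t_{n+1})\bm c(t_{n+1})+\lambda_I\,\bm\phi^T(t_{n+1})\bm\alpha(t_{n+1})$. By the defining property $\bm\phi^T(t_{n+1})\bm\alpha(t_{n+1})=1$, this equals $\bm\phi^T(t_{n+1})\bm c(t_{n+1})+\lambda_I$. Integrating over $I(t_{n+1})$ gives
\[
\int_{I(t_{n+1})}u^{Cor}_h(t_{n+1})\,d\bm x
=\bm p_I^T(t_{n+1})\bm c(t_{n+1})+\lambda_I\,\bm p_I^T(t_{n+1})\bm\alpha(t_{n+1}),
\]
using $\bm p_I(t)=\int_{I(t)}\bm\phi(t)\,d\bm x$. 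The first term is $\widetilde{\mathcal U}_I(t_{n+1})$ by definition. Substituting the formula for $\lambda_I$ cancels the denominator, so the right-hand side becomes $\widetilde{\mathcal U}_I(t_{n+1})+\mathcal U_I^\star(t_{n+1})-\widetilde{\mathcal U}_I(t_{n+1})=\mathcal U_I^\star(t_{n+1})$.

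The step that needs care is justifying that $\lambda_I$ is well defined and that the exact integral agrees with the quadrature $\mathcal Q_{I(t)}(\bm\phi)$. For well-definedness, $\bm p_I^T(t_{n+1})\bm\alpha(t_{n+1})=\int_{I(t_{n+1})}\bm\phi^T\bm\alpha\,d\bm x=|I(t_{n+1})|>0$, since the cells are assumed nondegenerate. For the quadrature, the components of $\bm\phi$ are polynomials of degree at most $2$. The six-point triangular rule and the $4\times4$ Gauss--Lobatto rule (whose nodes map through the bilinear map with Jacobian degree at most $1$ in each reference variable) integrate these exactly, consistent with the exactness the paper attributes to its quadrature choices. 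So $\mathcal Q_{I(t)}(\bm\phi)=\int_{I(t)}\bm\phi\,d\bm x$, and both interpretations of $\bm p_I$ coincide. With this in place the identity is purely algebraic.
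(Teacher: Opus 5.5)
Your proposal is correct and follows the paper's proof: both write the cell integral as $\widetilde{\mathcal U}_I(t_{n+1})+\lambda_I\,\bm p_I^T(t_{n+1})\bm\alpha(t_{n+1})$, use $\bm p_I^T\bm\alpha=|I(t_{n+1})|>0$ to see that $\lambda_I$ is well defined, and cancel the denominator. Your extra check that $\mathcal Q_{I(t)}(\bm\phi)$ equals the exact integral of $\bm\phi$ is a valid refinement that the paper leaves implicit: the mapped integrand has degree $2$ on triangles and lies in $\mathbb Q^3$ on bilinear quadrilaterals, and both are within the exactness of the chosen rules.
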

\begin{proof}
	Since $\bm p_I^T\bm\alpha=\int_{I(t_{n+1})}1\,d\bm x=|I(t_{n+1})|>0$, the correction parameter \(\lambda_I\) is well defined. Therefore,
	\[
	\int_{I(t_{n+1})}u_h^{Cor}d\bm x
	=\bm p_I^T\bm c_{Cor}(t_{n+1})
	=\widetilde{\mathcal U}_I(t_{n+1})
	+\lambda_I\bm p_I^T\bm\alpha(t_{n+1})
	=\mathcal U_I^\star(t_{n+1}).
	\]
\end{proof}
\begin{Prop}
	The CE-ALE-DG scheme is locally conservative. Moreover, since the numerical ALE flux is single-valued on each interior edge, the scheme is globally conservative under periodic boundary conditions.
\end{Prop}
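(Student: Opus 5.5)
The plan is to combine Lemma~\ref{lem:correctionTotal} with the Simpson update \eqref{eq:FDConservedODE} to obtain a discrete flux-form balance for the cell average on each moving cell. Then I would sum over cells and use single-valuedness of the numerical ALE flux to get global conservation.

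\textbf{Local conservation.} First, the initial condition for the next step must match the auxiliary total. By construction $\mathcal U_I^\star(t_n)=\int_{I(t_n)}u_h\,d\bm x$, where $u_h(t_n)$ is the corrected solution from the previous step (or the initial projection when $n=0$). Lemma~\ref{lem:correctionTotal} applied at $t_{n+1}$ gives $\int_{I(t_{n+1})}u_h^{Cor}(t_{n+1})d\bm x=\mathcal U_I^\star(t_{n+1})$. Inserting \eqref{eq:FDConservedODE} then yields
\[
\int_{I(t_{n+1})}u_h^{Cor}d\bm x-\int_{I(t_n)}u_h\,d\bm x
=-\frac{\Delta t}{6}\Bigl[\mathcal Q_{\partial I(t_n)}(\widehat g^{(0)})+4\mathcal Q_{\partial I(t_{n+1/2})}(\widehat g^{(2)})+\mathcal Q_{\partial I(t_{n+1})}(\widehat g^{(1)})\Bigr],
\]
where the superscripts denote evaluation at the states $\bm c^E(t^{(i)})$. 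The right-hand side depends only on edge fluxes. This is the discrete ALE integral conservation law on $I(t)$, i.e., local conservation.

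\textbf{Global conservation.} Next I would sum the identity over all cells $I\in\mathcal T_h$. Each interior edge $e$ at a given stage time is shared by exactly two cells with opposite normals $\pm\bm n_l$. The edge length, quadrature weights and the Gauss--Lobatto points are identical from both sides, because the mesh is conforming and vertices move on common trajectories. The mesh velocity $\bm\mu^g$ at these points is also common, since it is continuous and piecewise affine.

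The main point to check is the antisymmetry $\widehat g(u^-,u^+;\bm\mu^g,\bm n)=-\widehat g(u^+,u^-;\bm\mu^g,-\bm n)$ for the ALE Lax--Friedrichs flux. The central part flips sign with $\bm n$. The dissipation term $-\tfrac12\alpha_l(u^+-u^-)$ also flips sign under swapping the two states, while $\alpha_l$ is symmetric in $u^\pm$ and invariant under $\bm n\to-\bm n$, because the spectral radius of the negated normal Jacobian is unchanged.

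A second subtlety is that both neighbouring cells must use the same traces. On each side the trace is built from that cell's $\bm c^E(t^{(i)})$ at the same stage time, so the two sides see a common pair $(u^-,u^+)$. I expect this consistency of intermediate states across cells to be the step requiring the most care. It holds because $\bm c^E$ is formed by the same linear combination of the SSPRK3 stages in every cell, and all cells share the same stage times.

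Under periodic boundary conditions every edge is interior, so all flux contributions cancel in pairs. Hence $\sum_I\int_{I(t_{n+1})}u_h^{Cor}d\bm x=\sum_I\int_{I(t_n)}u_h\,d\bm x$. Since the cells tile $\Omega(t)$, this equals $\int_{\Omega(t_{n+1})}u_h=\int_{\Omega(t_n)}u_h$, which is global conservation.
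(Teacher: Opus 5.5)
Your proposal is correct and follows essentially the same route as the paper. You combine the initialization of $\mathcal U_I^\star(t_n)$ with Lemma~\ref{lem:correctionTotal} and the Simpson update \eqref{eq:FDConservedODE} to obtain a flux-form balance on each cell, then sum over cells and cancel the interior-edge contributions using $\bm n^+=-\bm n^-$ and the single-valued ALE flux. Your explicit checks that the ALE Lax--Friedrichs flux is antisymmetric (including the invariance of $\alpha_l$) and that neighbouring cells see consistent Simpson-state traces fill in details the paper asserts directly.
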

\begin{proof}
	First, the initialization of the auxiliary total at $t_n$ gives
	\[
	\mathcal U_I^\star(t_n)
	=
	\int_{I(t_n)}u_h(t_n)d\bm x
	=
	\bm p_I^T(t_n)\bm c(t_n).
	\]
	After the update \eqref{eq:FDConservedODE},
	Lemma~\ref{lem:correctionTotal} gives
	\[
	\mathcal U_I^\star(t_{n+1})
	=
	\int_{I(t_{n+1})}u_h^{Cor}(t_{n+1})d\bm x
	=
	\bm p_I^T(t_{n+1})\bm c_{Cor}(t_{n+1}).
	\]
	
	Next, substituting $\mathcal U_I^\star(t_n)$ and $\mathcal U_I^\star(t_{n+1})$ into
	\eqref{eq:FDConservedODE} yields
	\begin{equation}\label{eq:localCon}
		\begin{aligned}
			&\bm p_I^T(t_{n+1})\bm c_{Cor}(t_{n+1})
			-\bm p_I^T(t_n)\bm c(t_n)\\
			&=\frac{\Delta t}{6}\Big[
			\mathcal R_{I,1}(\bm c^E(t^{(0)}),t_n)
			+4\mathcal R_{I,1}(\bm c^E(t^{(2)}),t_{n+\frac12})
			+\mathcal R_{I,1}(\bm c^E(t^{(1)}),t_{n+1})
			\Big].
		\end{aligned}
	\end{equation}
	Since
	\[
	\mathcal R_{I,1}(\bm c,t)
	=
	-\sum_{e\subset\partial I(t)}
	\mathcal Q_{e(t)}
	\bigl(\widehat g_{I,e}(\bm c,t)\bigr),
	\]
	\eqref{eq:localCon} expresses the change of the cell total entirely
	through boundary fluxes and hence proves local conservation.
	
	Moreover, for an interior edge $e=I^-\cap I^+$,
	$\bm n^+=-\bm n^-$, and the single-valued numerical ALE flux satisfies
	\[
	\widehat g_{I^-,e}(\bm c,t)
	+
	\widehat g_{I^+,e}(\bm c,t)
	=0.
	\]
	Consequently,
	\[
	\sum_I\mathcal R_{I,1}(\bm c,t)
	=
	-\sum_{e\subset\partial\Omega(t)}
	\mathcal Q_{e(t)}
	\bigl(\widehat g_e(\bm c,t)\bigr).
	\]
	
	Finally, summing \eqref{eq:localCon} over all cells gives
	\[
	\begin{aligned}
		&\sum_I\bm p_I^T(t_{n+1})\bm c_{Cor}(t_{n+1})
		-\sum_I\bm p_I^T(t_n)\bm c(t_n)\\
		&=-\frac{\Delta t}{6}\sum_{e\subset\partial\Omega}
		\Big[
		\mathcal Q_{e(t_n)}(\widehat g_e^{(0)})
		+4\mathcal Q_{e(t_{n+\frac12})}(\widehat g_e^{(2)})
		+\mathcal Q_{e(t_{n+1})}(\widehat g_e^{(1)})
		\Big].
	\end{aligned}
	\]
	Thus, the global total changes only through the physical boundary.
	For periodic boundaries, these contributions cancel pairwise, and
	\[
	\sum_I\bm p_I^T(t_{n+1})\bm c_{Cor}(t_{n+1})
	=
	\sum_I\bm p_I^T(t_n)\bm c(t_n).
	\]
\end{proof}
Consequently, the auxiliary-total update \eqref{eq:FDConservedODE} together with the coefficient correction \eqref{coefConservedUpdate} restores local conservation, while the single-valued ALE flux ensures global conservation through pairwise cancellation on interior edges.

Before proving the fully discrete GCL and TPE($2$) properties, we establish the following quadrature exactness result.
\begin{lemma}\label{lem:QuadExact}
	Let $v_h\in\mathbb P^2(I(t))$ and let
	$\bm q\in\mathbb R^2$ be constant. Then
	\[
	\begin{aligned}
		\mathcal Q_{I(t)}(\nabla\bm\phi\bm qv_h)
		&=\int_{I(t)}\nabla\bm\phi\bm qv_hd\bm x,\\
		\mathcal Q_{\partial I(t)}
		\bigl(v_h(\bm q\cdot\bm n)\bm\phi\bigr)
		&=\int_{\partial I(t)}v_h(\bm q\cdot\bm n)\bm\phi dS,
	\end{aligned}
	\]
	and consequently
	\[
	\begin{aligned}
		&\mathcal Q_{I(t)}(\nabla\bm\phi\bm qv_h)
		-\mathcal Q_{\partial I(t)}
		\bigl(v_h(\bm q\cdot\bm n)\bm\phi\bigr)=-\int_{I(t)}\bm\phi(\bm q\cdot\nabla v_h)d\bm x.
	\end{aligned}
	\]
\end{lemma}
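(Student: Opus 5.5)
The plan is to reduce both identities to polynomial-exactness statements for the quadrature rules, and then obtain the consequence from the divergence theorem. The basis $\bm\phi$ is the frozen Taylor basis of Section~\ref{FrozenBasis}, a polynomial basis of $\mathbb P^2$ in $(x,y)$. Hence every component of $\nabla\bm\phi\,\bm q\,v_h$ is a polynomial of degree at most $1+2=3$, and every component of $v_h\bm\phi$ has degree at most $4$.

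\textbf{Boundary identity.} On each straight edge $\partial I_l(t)$, $\bm n$ is constant, so $v_h(\bm q\cdot\bm n)\bm\phi$ restricted to the edge is a univariate polynomial of degree at most $4$ in the arclength parameter. The four-point Gauss--Lobatto rule is exact up to degree $2\cdot4-3=5$. Summing the edge contributions weighted by $|\partial I_l(t)|$ therefore gives the second identity. This holds for triangles and for quadrilaterals, since the edges of a bilinearly mapped cell with straight-line vertex motion are still segments.

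\textbf{Volume identity.}
\begin{itemize}
\item \emph{Triangles.} The element map is affine and $|J|$ is constant, so the integrand pulled back to the reference triangle is a polynomial of degree at most $3$. The six-point Dunavant rule is exact for degree $4$, which gives the first identity.
\item \emph{Quadrilaterals.} This is the step I expect to be the main obstacle. The map $\bm x(\xi,\eta)$ is bilinear, so a physical polynomial of total degree $3$ becomes a polynomial of degree at most $3$ in each of $\xi$ and $\eta$ separately. The Jacobian determinant of a bilinear map is affine: $|J|=j_0+j_1\xi+j_2\eta$, because the $\xi\eta$ terms cancel. The pulled-back integrand $\nabla\bm\phi\,\bm q\,v_h\,|J|$ therefore has degree at most $4$ in each of $\xi$ and $\eta$ separately. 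The one-dimensional four-point Gauss--Lobatto rule is exact through degree $5$ in each variable, so the tensor rule integrates it exactly. The care needed is to track separate degrees in $\xi$ and $\eta$ rather than total degree, and to confirm that $|J|$ is affine rather than bilinear.
\end{itemize}

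\textbf{Consequence.} Subtracting the two identities reduces the claim to the exact relation
\[
\int_{I(t)}\nabla\bm\phi\,\bm q\,v_h\,d\bm x-\int_{\partial I(t)}v_h(\bm q\cdot\bm n)\bm\phi\,dS=-\int_{I(t)}\bm\phi(\bm q\cdot\nabla v_h)\,d\bm x .
\]
This follows from the divergence theorem applied componentwise to $\phi_\ell v_h\bm q$. Since $\bm q$ is constant, $\nabla\cdot(\phi_\ell v_h\bm q)=v_h\,\bm q\cdot\nabla\phi_\ell+\phi_\ell\,\bm q\cdot\nabla v_h$, which yields the formula exactly as in the integration-by-parts step of the TPE$(k)$ proposition.
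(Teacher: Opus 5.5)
Your proposal is correct and follows essentially the same argument as the paper. You pull the integrands back to the reference cell and edge and observe that they lie in $\mathbb P^3$ on triangles, in $\mathbb Q^4$ on quadrilaterals (using that the bilinear Jacobian determinant is affine), and in $\mathbb P^4$ on edges; you then invoke exactness of the six-point Dunavant, $4\times4$ Gauss--Lobatto, and four-point Gauss--Lobatto rules for $\mathbb P^4$, $\mathbb Q^5$, and $\mathbb P^5$, respectively, and finish with the divergence theorem, exactly as the paper does.
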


\begin{proof}
	Let $\widehat I$ and $\widehat e$ denote the reference cell and edge, respectively, with mappings $\Psi_I:\widehat I\to I(t)$ and $\Psi_e:\widehat e\to e(t)$.
	At each fixed time,
	\[
	\begin{gathered}
		\begin{array}{c|c|c}
			& \text{degree of } \Psi_I & \text{degree of } |J_I| \\ \hline
			\text{triangle}
			& [\mathbb P^1(\widehat I)]^2
			& \mathbb P^0(\widehat I)\\
			\text{quadrilateral}
			& [\mathbb Q^1(\widehat I)]^2
			& \mathbb P^1(\widehat I)\subset\mathbb Q^1(\widehat I),
		\end{array}\\[2pt]
		\Psi_e\in[\mathbb P^1(\widehat e)]^2,
		\qquad |J_e|\in\mathbb P^0(\widehat e),
		\qquad \bm n_e\in[\mathbb P^0(\widehat e)]^2.
	\end{gathered}
	\]
	Since $\bm\phi\in[\mathbb P^2]^6$,
	$\nabla\bm\phi\in[\mathbb P^1]^6$, and
	$v_h\in\mathbb P^2$, the mapped integrands satisfy
	\[
	\begin{aligned}
		\text{triangle:}\quad
		&\bigl[(\nabla\bm\phi\,\bm q\,v_h)\circ\Psi_I\bigr]|J_I|
		\in[\mathbb P^{3}(\widehat I)]^6
		\subset[\mathbb P^4(\widehat I)]^6,\\
		\text{quadrilateral:}\quad
		&\bigl[(\nabla\bm\phi\,\bm q\,v_h)\circ\Psi_I\bigr]|J_I|
		\in[\mathbb Q^{4}(\widehat I)]^6
		\subset[\mathbb Q^5(\widehat I)]^6,\\
		\text{edge:}\quad
		&\bigl[(v_h(\bm q\cdot\bm n_e)\bm\phi)\circ\Psi_e\bigr]|J_e|
		\in[\mathbb P^{4}(\widehat e)]^6
		\subset[\mathbb P^5(\widehat e)]^6.
	\end{aligned}
	\]
	The six-point triangular, the $4\times4$ tensor-product Gauss--Lobatto,
	and the four-point edge rules are exact for $\mathbb P^4(\widehat I)$, $\mathbb Q^5(\widehat I)$, and $\mathbb P^5(\widehat e)$, respectively. Hence the first two identities hold, and integration by parts gives
	\[
	\int_{I(t)}\nabla\bm\phi\bm qv_hd\bm x
	-\int_{\partial I(t)}v_h(\bm q\cdot\bm n)\bm\phi dS
	=-\int_{I(t)}\bm\phi(\bm q\cdot\nabla v_h)d\bm x.
	\]
\end{proof}
\subsubsection{Fully discrete GCL property}
We next establish the fully discrete GCL property of the CE-ALE-DG scheme.
\begin{Prop}
	The fully discrete CE-ALE-DG scheme satisfies the GCL, i.e.,
	\[
	u_h(\cdot,t_n)\equiv C
	\quad\Longrightarrow\quad
	u_h^{Cor}(\cdot,t_{n+1})\equiv C.
	\]
\end{Prop}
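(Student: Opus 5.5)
The argument follows the three stages of one time step: the SSPRK3 coefficient update \eqref{disScheme}, the basis transformation \eqref{eq:basis_update}, and the conservative correction \eqref{coefConservedUpdate}. The main work is showing that the semidiscrete right-hand side $\mathcal R_{I,0}$ vanishes exactly at a constant state. Because the basis $\bm\phi(t_n)$ is frozen on $[t_n,t_{n+1}]$, the coefficient vector of the constant $C$ is the fixed vector $\widehat{\bm c}=C\bm\alpha$ with $\bm\alpha=(1,0,\dots,0)^T$, and $\bm\phi_t=0$. It therefore suffices to show $\mathcal R_{I,0}(M(t),\widehat{\bm c})=0$ at each stage time $t_n$, $t_{n+1}$ and $t_{n+\frac12}$.

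\textbf{Step 1: the right-hand side vanishes.} Fix a stage time and take $u_h\equiv C$ on $I(t)$ and on all its neighbours; this requires the constant state to hold globally. Then $u_h^-=u_h^+=C$, so the dissipation term $\alpha_l(u_h^+-u_h^-)$ vanishes and the numerical flux reduces to $\widehat g=(\bm F(C)-C\bm\mu^g)\cdot\bm n$. The two mesh-velocity boundary terms in \eqref{semidis} are
\[
-\mathcal Q_{\partial I}\bigl((\bm\mu^g\cdot\bm n)C\bm\phi\bigr)
\quad\text{and}\quad
+\mathcal Q_{\partial I}\bigl(C(\bm\mu^g\cdot\bm n)\bm\phi\bigr).
\]
They cancel identically at the quadrature nodes, so no exactness of the quadrature in $\bm\mu^g$ is needed. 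This is the point where the lack of a Lipschitz bound on $\bm\mu^g$ is harmless. What remains is
\[
\mathcal Q_{I}(\nabla\bm\phi\,\bm F(C))-\mathcal Q_{\partial I}\bigl((\bm F(C)\cdot\bm n)\bm\phi\bigr).
\]
Apply Lemma~\ref{lem:QuadExact} with $\bm q=\bm F(C)$ and $v_h\equiv1\in\mathbb P^2$. The lemma turns this difference into $-\int_I\bm\phi(\bm q\cdot\nabla 1)\,d\bm x=0$. Hence $\mathcal R_{I,0}=0$, and this holds for both triangles and bilinear quadrilaterals because the lemma covers both cell types.

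\textbf{Step 2: induction through the stages.} By Step~1, $\bm c(t^{(1)})=\widehat{\bm c}$. Since the SSPRK3 combination weights sum to one, $\bm c(t^{(2)})=\widehat{\bm c}$ and $\bm c(t^{(3)})=\widehat{\bm c}$ follow. Each stage value is again the constant $C$ everywhere, which is the hypothesis needed to apply Step~1 at the next stage. The transformation $P^T$ preserves the represented polynomial, so $u_h(t_{n+1})\equiv C$ and $\bm c(t_{n+1})=C\bm\alpha(t_{n+1})$.

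\textbf{Step 3: the correction vanishes.} The provisional total is $\widetilde{\mathcal U}_I(t_{n+1})=C|I(t_{n+1})|$. It remains to show that $\mathcal U_I^\star(t_{n+1})$ equals the same value, so that $\lambda_I=0$. The reconstructed states $\bm c^E$ are convex combinations of stage values, so all of them equal $\widehat{\bm c}$. Consequently
\[
\mathcal R_{I,1}(t)=-\mathcal Q_{\partial I(t)}\bigl((\bm F(C)-C\bm\mu^g)\cdot\bm n\bigr).
\]
The $\bm F(C)$ part is exact and vanishes because $\sum_l|\partial I_l|\bm n_l=0$ for a closed polygon. The remaining part is $C\,\mathcal Q_{\partial I(t)}(\bm\mu^g\cdot\bm n)$.

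This is the step I expect to be the main obstacle. The vertices move affinely in time and $\bm\mu^g$ is affine along each edge, so the edge quadrature computes $\int_{\partial I}\bm\mu^g\cdot\bm n\,dS=\frac{d}{dt}|I(t)|$ exactly. The area $|I(t)|$ is a quadratic polynomial in $t$, so its derivative is linear in $t$, and Simpson's rule integrates it exactly over $[t_n,t_{n+1}]$. This gives
\[
\mathcal U_I^\star(t_{n+1})=C|I(t_n)|+C\bigl(|I(t_{n+1})|-|I(t_n)|\bigr)=C|I(t_{n+1})|.
\]
Hence $\lambda_I=0$ and $u_h^{Cor}(t_{n+1})\equiv C$.
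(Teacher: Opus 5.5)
Your proof is correct and follows the paper's argument essentially step for step. You cancel the mesh-velocity boundary terms and apply Lemma~\ref{lem:QuadExact} with $v_h=1$, $\bm q=\bm F(C)$ to get $\mathcal R_{I,0}(M(t),C\bm\alpha)=0$ at every stage. You then use exact edge quadrature for $\frac{d}{dt}|I(t)|\in\mathbb P^1(t)$ together with Simpson's rule to obtain $\mathcal U_I^\star(t_{n+1})=C|I(t_{n+1})|$ and hence $\lambda_I=0$. Your deviations are only cosmetic: you use $\sum_l|\partial I_l|\bm n_l=\bm 0$ instead of the divergence theorem for the $\bm F(C)$ term, and you state explicitly that the constant state must also hold on neighbouring cells so that $u_h^-=u_h^+=C$.
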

\begin{proof}
	The proof consists of two parts. First, we show that the SSPRK3 coefficient update preserves the constant state. Let $\bm c(t_n)=C\bm\alpha$, where $\bm\phi^T(t_n)\bm\alpha=1$. Since the basis is frozen,
	\[
	\bm\phi_t=0,
	\qquad
	\bm\phi^TC\bm\alpha=C,
	\qquad
	u_h^-=u_h^+=C.
	\]
	By consistency of the numerical ALE flux,
	\[
	\widehat g(C,C;\bm\mu^g,\bm n)
	=
	\bigl(\bm F(C)-C\bm\mu^g\bigr)\cdot\bm n.
	\]
	Substitution into \eqref{semidis} gives
	\[
	\begin{aligned}
		&\mathcal R_{I,0}(M(t),C\bm\alpha)=M^{-1}(t)\Big[
		-\mathcal Q_{\partial I(t)}
		\bigl((\bm\mu^g\cdot\bm n)\bm\phi
		\bm\phi^TC\bm\alpha\bigr)
		-\mathcal Q_{\partial I(t)}
		\bigl(\widehat g(C,C;\bm\mu^g,\bm n)\bm\phi\bigr)\\
		&+\mathcal Q_{I(t)}
		\bigl(\nabla\bm\phi\bm F(C)\bigr)
		\Big]\\
		=&M^{-1}(t)\Big[
		-C\mathcal Q_{\partial I(t)}
		\bigl((\bm\mu^g\cdot\bm n)\bm\phi\bigr)
		-\mathcal Q_{\partial I(t)}
		\bigl((\bm F(C)-C\bm\mu^g)\cdot\bm n\bm\phi\bigr)
		+\mathcal Q_{I(t)}
		\bigl(\nabla\bm\phi\,\bm F(C)\bigr)
		\Big]\\
		=&M^{-1}(t)\Big[
		-\mathcal Q_{\partial I(t)}
		\bigl((\bm F(C)\cdot\bm n)\bm\phi\bigr)
		+\mathcal Q_{I(t)}
		\bigl(\nabla\bm\phi\,\bm F(C)\bigr)
		\Big]\\
		=&-M^{-1}(t)
		\int_{I(t)}\bm\phi\bigl(\bm F(C)\cdot\nabla 1\bigr)d\bm x=0,
	\end{aligned}
	\]
	where the last equality follows from
	Lemma~\ref{lem:QuadExact} with
	$v_h=1$ and $\bm q=\bm F(C)$. Consequently, all SSPRK3 stages remain equal to \(C\bm\alpha\). Using the basis transformation, the provisional solution at \(t_{n+1}\) satisfies $	\bm\phi^T(t_{n+1})\bm c(t_{n+1})=C.$
	
	Next, we show that the conservative correction also preserves the constant state. For the constant Simpson states, flux consistency gives
	\[
	\widehat g(C,C;\bm\mu^g,\bm n)
	=
	\bigl(\bm F(C)-C\bm\mu^g\bigr)\cdot\bm n.
	\]
	Hence
	\[
	\begin{aligned}
		\mathcal R_{I,1}(C\bm\alpha,t)
		&=
		-\mathcal Q_{\partial I(t)}
		\bigl(\widehat g(C,C;\bm\mu^g,\bm n)\bigr)=
		-\mathcal Q_{\partial I(t)}
		\bigl(\bm F(C)\cdot\bm n\bigr)
		+
		C\mathcal Q_{\partial I(t)}
		\bigl(\bm\mu^g\cdot\bm n\bigr)\\
		&=
		-\int_{\partial I(t)}
		\bm F(C)\cdot\bm ndS
		+
		C\mathcal Q_{\partial I(t)}
		\bigl(\bm\mu^g\cdot\bm n\bigr)\\
		&=
		-\int_{I(t)}
		\nabla\cdot\bm F(C)d\bm x
		+
		C\mathcal Q_{\partial I(t)}
		\bigl(\bm\mu^g\cdot\bm n\bigr)=
		C\mathcal Q_{\partial I(t)}
		\bigl(\bm\mu^g\cdot\bm n\bigr).
	\end{aligned}
	\]
	Since every cell vertex moves linearly in time, $	|I(t)|\in\mathbb P^2(t)$ and $
	\frac{d}{dt}|I(t)|\in\mathbb P^1(t).$
	Moreover, the Reynolds transport theorem and exact boundary
	quadrature give
	\[
	\frac{d}{dt}|I(t)|
	=
	\int_{\partial I(t)}\bm\mu^g\cdot\bm n\,dS
	=
	\mathcal Q_{\partial I(t)}
	\bigl(\bm\mu^g\cdot\bm n\bigr).
	\]
	Since Simpson’s rule is exact for temporal polynomials of degree at most three, it follows that
	\[
	\begin{aligned}
		|I(t_{n+1})|-|I(t_n)|
		=
		\frac{\Delta t}{6}\Big[
		&\mathcal Q_{\partial I(t_n)}
		(\bm\mu^g\cdot\bm n)+4\mathcal Q_{\partial I(t_{n+\frac12})}
		(\bm\mu^g\cdot\bm n)+\mathcal Q_{\partial I(t_{n+1})}
		(\bm\mu^g\cdot\bm n)
		\Big].
	\end{aligned}
	\]
	Using $\mathcal U_I^\star(t_n)=C|I(t_n)|$ in \eqref{eq:FDConservedODE} yields
	\[
	\mathcal U_I^\star(t_{n+1})=C|I(t_{n+1})|
	=\widetilde{\mathcal U}_I(t_{n+1}).
	\]
	Thus $\lambda_I=0$, and the corrected coefficient vector $\bm{c}_{Cor}(t_{n+1})$ computed by \eqref{coefConservedUpdate} remains $C\bm\alpha$. Consequently,
	\[
	u_h^{Cor}(\cdot,t_{n+1})
	=
	\bm\phi^T(t_{n+1})\bm c_{Cor}(t_{n+1})
	\equiv C,
	\]
	which completes the proof.
\end{proof}
\subsubsection{Fully discrete TPE($2$) property}
In this part, we consider the linear advection equation \eqref{eq:linearAdvEquation}. As in the fully discrete GCL analysis, we prove that the CE-ALE-DG scheme reproduces the exact solution for $u_0\in\mathbb P^2$ by showing that the SSPRK3 coefficient update preserves polynomial transport and that the correction parameter \(\lambda_I\) is zero for such solutions.
\begin{lemma}\label{lem:TPESemiDis}
	Let $\bm m(x,y)=(1,x,y,x^2,xy,y^2)^T$, and let $T:=T(t_n)\in\mathbb R^{6\times6}$ be the matrix satisfying $\bm\phi^T(t_n)T=\bm m^T.$ The matrix $A\in\mathbb R^{6\times6}$ given in Lemma~\ref{lem:NablaMAndA} satisfies $-\bm a\cdot\nabla\bm m^T=\bm m^T A.$ If $u_h=\bm m^T\bm d$ is a single-valued polynomial in $\mathbb P^2$, with coefficient vector $\bm d\in\mathbb R^6$, then its local DG coefficient vector is $\bm c=T\bm d$ and $\mathcal R_{I,0}(M(t),T\bm d) = TA\bm d$ for $t\in[t_n,t_{n+1}].$
\end{lemma}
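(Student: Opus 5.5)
The plan mirrors the semidiscrete TPE($k$) proposition, with exact integrals replaced by the quadratures $\mathcal Q_{I(t)}$, $\mathcal Q_{\partial I(t)}$ and Lemma~\ref{lem:QuadExact} supplying exactness.

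\textbf{Step 1: coefficient representation.} The basis $\bm\phi(t_n)$ is frozen on $[t_n,t_{n+1}]$, so $T=T(t_n)$ is constant in time. Since $\bm\phi^T(t_n)T=\bm m^T$, the polynomial $u_h=\bm m^T\bm d$ has DG coefficient vector $\bm c=T\bm d$ on every cell and at every $t$ in the interval. Uniqueness follows because the components of $\bm\phi(t_n)$ are linearly independent.

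\textbf{Step 2: evaluate the numerical flux.} Because $u_h$ is single-valued, $u_h^-=u_h^+=u_h$ at every edge quadrature point. Flux consistency then gives
\[
\widehat g(u_h,u_h;\bm\mu^g,\bm n)=u_h(\bm a-\bm\mu^g)\cdot\bm n,
\]
since the dissipation term vanishes. Substituting into \eqref{semidis} with $\bm\phi^T\bm c=u_h$:
\begin{itemize}
\item the first boundary term becomes $-\mathcal Q_{\partial I(t)}\bigl((\bm\mu^g\cdot\bm n)\bm\phi u_h\bigr)$;
\item the flux term becomes $-\mathcal Q_{\partial I(t)}\bigl(u_h(\bm a\cdot\bm n)\bm\phi\bigr)+\mathcal Q_{\partial I(t)}\bigl(u_h(\bm\mu^g\cdot\bm n)\bm\phi\bigr)$.
\end{itemize}
The mesh-velocity contributions cancel exactly at the level of the quadrature sums. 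They are evaluated at the same nodes with the same weights, so no exactness is needed for these terms. This matters, because $\bm\mu^g$ is only piecewise affine along edges.

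What remains is
\[
M^{-1}(t)\bigl[\mathcal Q_{I(t)}(\nabla\bm\phi\,\bm a u_h)-\mathcal Q_{\partial I(t)}(u_h(\bm a\cdot\bm n)\bm\phi)\bigr].
\]

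\textbf{Step 3: apply quadrature exactness.} Lemma~\ref{lem:QuadExact} with $v_h=u_h\in\mathbb P^2$ and $\bm q=\bm a$ turns the bracket into
\[
-\int_{I(t)}\bm\phi\,(\bm a\cdot\nabla u_h)\,d\bm x .
\]
By Lemma~\ref{lem:NablaMAndA}, $-\bm a\cdot\nabla u_h=\bm m^TA\bm d=\bm\phi^TTA\bm d$. Hence the bracket equals $\int_{I(t)}\bm\phi\bm\phi^T d\bm x\,TA\bm d$, and we need this to equal $M(t)TA\bm d$.

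\textbf{Main obstacle.} The subtle point is that $M(t)$ in the scheme must be the same matrix as this exact integral, or at least one computed by a quadrature exact for it. The integrand $\bm\phi\bm\phi^T$ has degree $4$. On triangles the six-point rule is exact for $\mathbb P^4$. On quadrilaterals the mapped integrand lies in $\mathbb Q^5$, which the $4\times4$ Gauss--Lobatto rule integrates exactly. So I will state that $M(t)$ is either exact or computed by $\mathcal Q_{I(t)}$, and note that the identity holds either way.

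Therefore $\mathcal R_{I,0}(M(t),T\bm d)=M^{-1}(t)M(t)TA\bm d=TA\bm d$ for all $t\in[t_n,t_{n+1}]$. This is exactly the claim; the argument uses only the time-independence of $T$ coming from the frozen basis.
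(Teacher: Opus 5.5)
Your proposal is correct and follows the paper's proof step for step. The frozen basis gives $\bm c=T\bm d$, flux consistency with $u_h^-=u_h^+$ cancels the two mesh-velocity quadrature terms, Lemma~\ref{lem:QuadExact} with $v_h=u_h$ and $\bm q=\bm a$ turns the remainder into $-\int_{I(t)}\bm\phi\,(\bm a\cdot\nabla u_h)\,d\bm x$, and $-\bm a\cdot\nabla u_h=\bm\phi^TTA\bm d$ yields $M^{-1}(t)M(t)TA\bm d=TA\bm d$; your extra remarks (node-by-node cancellation needing no exactness, and exactness of a quadrature-computed $M(t)$ via $\mathbb P^4$ on triangles and $\mathbb Q^5$ on bilinear quadrilaterals) are valid details the paper leaves implicit, since it defines $M(t)$ as the exact integral.
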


\begin{proof}
	First, since the basis is frozen on $[t_n,t_{n+1}]$, $	\bm\phi(t)=\bm\phi(t_n), \bm\phi_t=0.$ Using $\bm\phi^T(t_n)T=\bm m^T$ together with $u_h=\bm\phi^T\bm c=\bm m^T\bm d=\bm\phi^TT\bm d,$ and the linear independence of the basis functions, we obtain $\bm c=T\bm d$.
	
	Next, for \eqref{eq:linearAdvEquation},
	$\bm F(u_h)=\bm a u_h$. Since $u_h$ is single-valued,
	flux consistency gives
	\[
	\widehat g(u_h,u_h;\bm\mu^g,\bm n)
	=
	u_h(\bm a-\bm\mu^g)\cdot\bm n.
	\]
	Substitution into \eqref{semidis} yields
	\[
	\begin{aligned}
		\mathcal R_{I,0}(M(t),\bm c)
		=M^{-1}(t)\Big[
		&-\mathcal Q_{\partial I(t)}
		\bigl(u_h(\bm a\cdot\bm n)\bm\phi\bigr)+\mathcal Q_{I(t)}
		\bigl(\nabla\bm\phi\,\bm a u_h\bigr)
		\Big].
	\end{aligned}
	\]
	By Lemma~\ref{lem:QuadExact} with $v_h=u_h$ and
	$\bm q=\bm a$,
	\[
	\mathcal R_{I,0}(M(t),\bm c)
	=
	-M^{-1}(t)
	\int_{I(t)}
	\bm\phi(\bm a\cdot\nabla u_h)d\bm x.
	\]
	Moreover,
	\[
	\begin{aligned}
		-\bm a\cdot\nabla u_h
		&=-\bm a\cdot\nabla(\bm m^T\bm d)=\bm m^TA\bm d
		=\bm\phi^TTA\bm d.
	\end{aligned}
	\]
	Therefore,
	\[
	\begin{aligned}
		\mathcal R_{I,0}(M(t),\bm c)
		&=
		M^{-1}(t)
		\int_{I(t)}\bm\phi\bm\phi^Td\bm xTA\bm d=
		M^{-1}(t)M(t)TA\bm d
		=
		TA\bm d.
	\end{aligned}
	\]
\end{proof}
\begin{lemma}\label{lem:TPEFullDis}
	In the setting of Lemma~\ref{lem:TPESemiDis}, if $\bm c(t_n)=T\bm d(t_n),$ then \eqref{disScheme} satisfies $\widetilde{\bm c}(t_{n+1})=T\bm d(t_{n+1})$. Moreover, the Simpson states satisfy
	\[
	\begin{aligned}
		\bm c^E(t^{(0)})=T\bm d(t_n),\quad\bm c^E(t^{(2)})=T\bm d(t_{n+\frac12}),\quad\bm c^E(t^{(1)})=T\bm d(t_{n+1}).
	\end{aligned}
	\]
\end{lemma}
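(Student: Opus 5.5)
The plan is to reduce the SSPRK3 recursion \eqref{disScheme} to a linear recursion for the polynomial coefficient vector $\bm d$, using Lemma~\ref{lem:TPESemiDis}. Since the basis is frozen on $[t_n,t_{n+1}]$, the matrix $T=T(t_n)$ is the same at every stage. Lemma~\ref{lem:TPESemiDis} gives $\mathcal R_{I,0}(M(t),T\bm d)=TA\bm d$ for every $t$ in the step and every $\bm d$, regardless of the geometry at the stage time. Hence each stage acts on vectors of the form $T\bm d$ by a fixed linear map that does not depend on $M(t^{(i)})$.

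The first step is an induction over the stages. If $\bm c(t^{(0)})=T\bm d_0$ with $\bm d_0=\bm d(t_n)$, then
\[
\bm c(t^{(1)})=T(I+\Delta tA)\bm d_0 .
\]
Next,
\[
\bm c(t^{(2)})=T\bigl(I+\tfrac12\Delta tA+\tfrac14\Delta t^2A^2\bigr)\bm d_0 ,
\]
and
\[
\bm c(t^{(3)})=T\bigl(I+\Delta tA+\tfrac12\Delta t^2A^2+\tfrac16\Delta t^3A^3\bigr)\bm d_0 .
\]
This is the standard SSPRK3 stability polynomial applied to $A$. Each step uses only linearity of $T$ and the identity $\mathcal R_{I,0}(\cdot,T\bm v)=TA\bm v$. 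Single-valuedness of $u_h=\bm m^T\bm v$ holds at every stage because the same global polynomial is generated on every cell.

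The key observation is that $A$ is nilpotent: $-\bm a\cdot\nabla$ lowers the degree, so $A^3=0$ on $\mathbb P^2$. By Lemma~\ref{lem:NablaMAndA} the exact solution is $\bm d(t)=e^{(t-t_n)A}\bm d_0$, and for a nilpotent $A$ with $A^3=0$ this exponential equals its degree-two Taylor polynomial. Therefore
\[
\widetilde{\bm c}(t_{n+1})=\bm c(t^{(3)})=Te^{\Delta tA}\bm d_0=T\bm d(t_{n+1}).
\]

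For the Simpson states, $\bm c^E(t^{(0)})=T\bm d(t_n)$ holds trivially. Also $\bm c^E(t^{(1)})=\bm c(t^{(3)})=T\bm d(t_{n+1})$ by the result just proved. For the midpoint state,
\[
\tfrac14\bm c(t^{(0)})+\tfrac14\bm c(t^{(1)})+\tfrac12\bm c(t^{(2)})
=T\bigl(I+\tfrac12\Delta tA+\tfrac18\Delta t^2A^2\bigr)\bm d_0 ,
\]
which equals $Te^{\frac{\Delta t}{2}A}\bm d_0=T\bm d(t_{n+\frac12})$, again because $A^3=0$.

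The main obstacle is the care needed to confirm that the stage states really stay of the form $T\bm d$ with single-valued polynomials. That is, neighbouring cells must carry the same global polynomial so that flux consistency applies in Lemma~\ref{lem:TPESemiDis}. This follows because every cell performs the identical recursion on the same global $\bm d$, with $T$ depending only on the cell. The remaining work is routine algebra, in particular checking the coefficients $\tfrac14$ and $\tfrac18$ in the midpoint combination.
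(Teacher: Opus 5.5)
Your proposal is correct and follows the paper's proof step for step. Lemma~\ref{lem:TPESemiDis} reduces each SSPRK3 stage to multiplication by a polynomial in $Z=\Delta tA$, so the final stage is $\mathbf{I}+Z+\frac12Z^2+\frac16Z^3$. Nilpotency $A^3=0$ (because $-\bm a\cdot\nabla$ lowers degree) then identifies this with $e^{Z}$, and the midpoint Simpson combination reduces to $\mathbf{I}+\frac12Z+\frac18Z^2=e^{Z/2}$.

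Your remark that every stage state stays a single-valued global polynomial, so that flux consistency can be used at each stage, is left implicit in the paper. It holds wherever all neighbouring cells run the same recursion, that is, on interior edges and under periodic boundaries.
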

\begin{proof}
	Let $\mathcal L=-\bm a\cdot\nabla$. Since
	$\mathcal L\mathbb P^r\subset\mathbb P^{r-1}$, we have $	\mathcal L^3q=0$ for every $ q\in\mathbb P^2.$
	Moreover, since $\mathcal L(\bm m^T\bm d)=\bm m^TA\bm d,$ it follows that $	\bm m^TA^3\bm d=0$ for every $\bm d\in\mathbb R^6.$
	The linear independence of the components of $\bm m$ therefore gives $A^3=0.$ 
	
	By Lemma~\ref{lem:NablaMAndA}, the coefficient vector $\bm d(t)$ satisfies
	\[
	\bm d'(t)=A\bm d(t),
	\qquad
	\bm d(t_{n+1})=e^{\Delta tA}\bm d(t_n).
	\]
	
	We now prove $\widetilde{\bm c}(t_{n+1})=T\bm d(t_{n+1})$. Set $	\bm d^{(0)}=\bm d(t_n), Z=\Delta tA.$
	Since $\bm c(t^{(0)})=T\bm d^{(0)}$, Lemma~\ref{lem:TPESemiDis}
	and the first SSPRK3 stage give
	\[
	\begin{aligned}
		\bm c(t^{(1)})
		&=T\bm d^{(0)}+\Delta t\,TA\bm d^{(0)}=T(\mathbf{I}+Z)\bm d^{(0)}
		=:T\bm d^{(1)}.
	\end{aligned}
	\]
	Similarly,
	\[
	\begin{aligned}
		\bm c(t^{(2)})
		&=T\left\{
		\frac34\bm d^{(0)}
		+\frac14(\mathbf{I}+Z)\bm d^{(1)}
		\right\}=T\left(\mathbf{I}+\frac12Z+\frac14Z^2\right)\bm d^{(0)}
		=:T\bm d^{(2)},
	\end{aligned}
	\]
	and
	\[
	\begin{aligned}
		\bm c(t^{(3)})
		&=T\left\{
		\frac13\bm d^{(0)}
		+\frac23(\mathbf{I}+Z)\bm d^{(2)}
		\right\}=T\left(
		\mathbf{I}+Z+\frac12Z^2+\frac16Z^3
		\right)\bm d^{(0)}.
	\end{aligned}
	\]
	Since $Z^3=0$, $\mathbf{I}+Z+\frac12Z^2+\frac16Z^3=e^Z.$ Here, $e^Z=e^{\Delta tA}$ is the exact evolution operator of
	$\bm d'(t)=A\bm d(t)$ over one time step. Hence, $e^Z\bm d(t_n)=\bm d(t_{n+1}).$
	Therefore,
	\[
	\widetilde{\bm c}(t_{n+1})
	=\bm c(t^{(3)})
	=Te^{\Delta tA}\bm d(t_n)
	=T\bm d(t_{n+1}).
	\]
	
	Finally, the endpoint Simpson states satisfy
	\[
	\bm c^E(t^{(0)})=T\bm d(t_n),
	\qquad
	\bm c^E(t^{(1)})=T\bm d(t_{n+1}).
	\]
	For the midpoint state,
	\[
	\begin{aligned}
		\bm c^E(t^{(2)})
		&=T\left(
		\frac14\bm d^{(0)}
		+\frac14\bm d^{(1)}
		+\frac12\bm d^{(2)}
		\right)\\
		&=T\left(\mathbf{I}+\frac12Z+\frac18Z^2\right)\bm d(t_n)\\
		&=Te^{Z/2}\bm d(t_n)
		=T\bm d(t_{n+\frac12}).
	\end{aligned}
	\]
\end{proof}
\begin{lemma}\label{lem:TPEAuxCorr}
	Under the hypotheses of Lemma~\ref{lem:TPEFullDis}, let $u(x,y,t)=u_0(x-a_1t,y-a_2t),$ where $ u_0\in\mathbb P^2.$ If $	\mathcal U_I^\star(t_n) = \int_{I(t_n)}u(x,y,t_n)d\bm x,$ then the update \eqref{eq:FDConservedODE} satisfies $	\mathcal U_I^\star(t_{n+1}) = \int_{I(t_{n+1})}u(x,y,t_{n+1})d\bm x.$
\end{lemma}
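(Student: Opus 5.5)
The plan is to compare the Simpson update \eqref{eq:FDConservedODE} with the exact evolution of the cell total of the transported polynomial. Set
\[
V(t):=\int_{I(t)}u(x,y,t)\,d\bm x ,
\]
so that $\mathcal U_I^\star(t_n)=V(t_n)$ by hypothesis and the claim reads $\mathcal U_I^\star(t_{n+1})=V(t_{n+1})$. I will show two things. First, each Simpson sample $\mathcal R_{I,1}$ in \eqref{eq:FDConservedODE} equals $V'$ at the corresponding time. Second, $V'$ is a polynomial in $t$ of degree at most three. Since Simpson's rule is exact for cubics, this gives $V(t_{n+1})-V(t_n)=\int_{t_n}^{t_{n+1}}V'(t)\,dt$, which is exactly the Simpson sum.

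\textbf{Step 1: the Simpson states are the exact solution.} By Lemma~\ref{lem:TPEFullDis}, the Simpson states $\bm c^E(t^{(0)})$, $\bm c^E(t^{(2)})$, $\bm c^E(t^{(1)})$ equal $T\bm d(t_n)$, $T\bm d(t_{n+\frac12})$, $T\bm d(t_{n+1})$. Hence on $I$ they represent $u(\cdot,t_n)$, $u(\cdot,t_{n+\frac12})$, $u(\cdot,t_{n+1})$. The same lemma, applied to each neighboring cell (the TPE setting has the same global polynomial on every cell), shows that the traces satisfy $u_h^-=u_h^+=u$ on every edge. Flux consistency then gives
\[
\mathcal R_{I,1}=-\mathcal Q_{\partial I(t)}\bigl(u(\bm a-\bm\mu^g)\cdot\bm n\bigr)
\]
at the three times $t_n$, $t_{n+\frac12}$, $t_{n+1}$.

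\textbf{Step 2: exact quadrature and the Reynolds identity.} On each straight edge, $\bm n$ is constant and $|J_e|$ is constant. The mesh velocity $\bm\mu^g$ is affine along the edge, because the vertex velocities are constant within the step. Hence $u(\bm a-\bm\mu^g)\cdot\bm n$ has degree at most $3$ in the edge parameter, and the four-point Gauss--Lobatto rule integrates it exactly, as in Lemma~\ref{lem:QuadExact}. Next, the Reynolds transport theorem gives
\[
V'(t)=\int_{I(t)}u_t\,d\bm x+\int_{\partial I(t)}u\,(\bm\mu^g\cdot\bm n)\,dS .
\]
Substituting $u_t=-\nabla\cdot(\bm a u)$ and applying the divergence theorem yields
\[
V'(t)=-\int_{\partial I(t)}u(\bm a-\bm\mu^g)\cdot\bm n\,dS .
\]
Therefore $\mathcal R_{I,1}=V'(t)$ at each Simpson node.

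\textbf{Step 3: degree count in time.} I expect this to be the main obstacle, particularly on quadrilaterals. Write $V(t)=\int_{\widehat I}u(\Psi_I(\widehat{\bm x},t),t)\,|J_I|(\widehat{\bm x},t)\,d\widehat{\bm x}$. The vertices are affine in $t$, so $\Psi_I$ is affine in $t$ for each fixed $\widehat{\bm x}$. Then $u(\Psi_I,t)=u_0(\Psi_I-\bm a t)$ is quadratic in $t$. The Jacobian $|J_I|$ is a $2\times2$ determinant whose entries are affine in $t$; this holds for the bilinear map too, since its derivatives are also affine in $t$. So $|J_I|$ is quadratic in $t$. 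Hence $V\in\mathbb P^4(t)$ and $V'\in\mathbb P^3(t)$. Simpson's rule is exact for cubics, so
\[
\mathcal U_I^\star(t_{n+1})=V(t_n)+\frac{\Delta t}{6}\bigl[V'(t_n)+4V'(t_{n+\frac12})+V'(t_{n+1})\bigr]=V(t_{n+1}),
\]
which proves the lemma. The points I would double-check are that the Simpson midpoint geometry is indeed evaluated at $t_{n+\frac12}$, and that the neighbor traces coincide, so that the dissipation term $\alpha_l(u^+-u^-)$ in the numerical flux vanishes.
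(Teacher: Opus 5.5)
Your proposal is correct and follows essentially the same route as the paper. Like the paper, you use the Reynolds transport theorem to identify $V'$ with the exact ALE boundary flux. You then use exact four-point Gauss--Lobatto quadrature of the degree-three edge integrand, together with flux consistency and Lemma~\ref{lem:TPEFullDis}, at the three Simpson nodes. Finally, the count $V\in\mathbb P^4(t)$, $V'\in\mathbb P^3(t)$ makes Simpson's rule exact. Your explicit remark that the neighboring traces coincide, so that the dissipative term $\alpha_l(u^+-u^-)$ vanishes, spells out what the paper folds into ``flux consistency.''
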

\begin{proof}
	Define $\mathcal U_I^{ex}(t):=\int_{I(t)}u(x,y,t)d\bm x, \mathcal S_I^{ex}(t):=\frac{d}{dt}\mathcal U_I^{ex}(t).$
	Applying the Reynolds transport theorem and using
	\eqref{eq:linearAdvEquation}, we obtain
	\[
	\begin{aligned}
		\mathcal S_I^{ex}(t)
		&=\int_{I(t)}u_td\bm x
		+\int_{\partial I(t)}u\bm\mu^g\cdot\bm ndS\\
		&=-\int_{I(t)}\nabla\cdot(\bm a u)d\bm x
		+\int_{\partial I(t)}u\bm\mu^g\cdot\bm ndS\\
		&=-\int_{\partial I(t)}
		u(\bm a-\bm\mu^g)\cdot\bm ndS.
	\end{aligned}
	\]
	To determine the temporal degree of $\mathcal S_I^{ex}(t)$, let $\widehat I$ be the reference element and let $\widehat{\bm x}=(\xi,\eta)^T\in\widehat I$ denote its reference coordinate. Define the element mapping and its Jacobian factor by
	\[
	\Psi_I(\widehat{\bm x},t):\widehat I\to I(t),
	\qquad
	\bm x=\Psi_I(\widehat{\bm x},t),
	\qquad
	|J_I(\widehat{\bm x},t)|
	:=\left|\det D_{\widehat{\bm x}}\Psi_I(\widehat{\bm x},t)\right|.
	\]
	The mapping is affine for triangular cells and bilinear for quadrilateral cells. Since every vertex trajectory is linear on
	$[t_n,t_{n+1}]$,
	\[
	\Psi_I(\widehat{\bm x},t)
	=\Psi_I(\widehat{\bm x},t_n)
	+(t-t_n)\bm\mu_I^g(\widehat{\bm x}).
	\]
	Hence, for each fixed $\widehat{\bm x}$,
	\[
	\begin{aligned}
		&\Psi_I(\widehat{\bm x},t)\in[\mathbb P^1(t)]^2,\quad
		D_{\widehat{\bm x}}\Psi_I(\widehat{\bm x},t)
		\in[\mathbb P^1(t)]^{2\times2},\\
		&u(\Psi_I(\widehat{\bm x},t),t)\in\mathbb P^2(t),\quad
		|J_I(\widehat{\bm x},t)|\in\mathbb P^2(t).
	\end{aligned}
	\]
	Consequently,
	\[
	\mathcal U_I^{ex}(t)
	=\int_{\widehat I}
	u(\Psi_I(\widehat{\bm x},t),t)
	|J_I(\widehat{\bm x},t)|d\widehat{\bm x}
	\in\mathbb P^4(t),\quad
	\mathcal S_I^{ex}\in\mathbb P^3(t).
	\]
	
	Next, we show that $\mathcal R_{I,1}$ evaluated with the Simpson states agrees with $\mathcal S_I^{ex}$ at the three corresponding times. For $t\in\{t_n,t_{n+\frac12},t_{n+1}\}$ and every straight edge $e(t)$, linear interpolation of the vertex velocities gives
	\[
	\begin{aligned}
		u(\cdot,t)|_{e(t)}&\in\mathbb P^2(e(t)),\quad \bm\mu^g(\cdot,t)|_{e(t)}\in[\mathbb P^1(e(t))]^2, \quad\bm n_e\in[\mathbb P^0(e(t))]^2.
	\end{aligned}
	\]
	Consequently,
	\[
	u(\cdot,t)(\bm a-\bm\mu^g(\cdot,t))\cdot\bm n_e
	\in\mathbb P^3(e(t)).
	\]
	The edge mapping is affine, so the four-point Gauss--Lobatto rule, which
	is exact through degree five, gives
	\[
	\mathcal Q_{\partial I(t)}
	\left(u(\cdot,t)(\bm a-\bm\mu^g)\cdot\bm n\right)
	=
	\int_{\partial I(t)}
	u(\cdot,t)(\bm a-\bm\mu^g)\cdot\bm n\,dS.
	\]
	Flux consistency and Lemma~\ref{lem:TPEFullDis} therefore yield
	\[
	\begin{aligned}
		\mathcal R_{I,1}(\bm c^E(t^{(0)}),t_n)
		&=\mathcal S_I^{ex}(t_n),\\
		\mathcal R_{I,1}(\bm c^E(t^{(2)}),t_{n+\frac12})
		&=\mathcal S_I^{ex}(t_{n+\frac12}),\\
		\mathcal R_{I,1}(\bm c^E(t^{(1)}),t_{n+1})
		&=\mathcal S_I^{ex}(t_{n+1}).
	\end{aligned}
	\]
	Finally, since $\mathcal S_I^{ex}\in\mathbb P^3(t)$, Simpson's rule is exact and gives
	\[
	\int_{t_n}^{t_{n+1}}\mathcal S_I^{ex}(t)\,dt
	=\frac{\Delta t}{6}\left[
	\mathcal S_I^{ex}(t_n)
	+4\mathcal S_I^{ex}(t_{n+\frac12})
	+\mathcal S_I^{ex}(t_{n+1})
	\right].
	\]
	Substitution into \eqref{eq:FDConservedODE} yields
	\[
	\begin{aligned}
		\mathcal U_I^\star(t_{n+1})
		&=\mathcal U_I^{ex}(t_n)
		+\int_{t_n}^{t_{n+1}}\mathcal S_I^{ex}(t)\,dt=\mathcal U_I^{ex}(t_n)
		+\int_{t_n}^{t_{n+1}}\frac{d}{dt}\mathcal U_I^{ex}(t)\,dt
		=\mathcal U_I^{ex}(t_{n+1}).
	\end{aligned}
	\]
\end{proof}
\begin{Prop}
	The fully discrete CE-ALE-DG scheme satisfies TPE(2). If $	u_h(\bm x,t_n)=u(\bm x,t_n)\in\mathbb P^2$ and $ \mathcal U_I^\star(t_n)=\int_{I(t_n)}u(\bm x,t_n)d\bm x,$
	then $u_h^{Cor}(\bm x,t_{n+1})=u(\bm x,t_{n+1}).$
\end{Prop}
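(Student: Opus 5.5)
The plan is to chain the three preceding lemmas and then show that the conservative correction is inactive. Since $u_h(\bm x,t_n)=u(\bm x,t_n)\in\mathbb P^2$, we write $u(\cdot,t_n)=\bm m^T\bm d(t_n)$ with $\bm m=(1,x,y,x^2,xy,y^2)^T$. Lemma~\ref{lem:TPESemiDis} then gives the frozen-basis coefficients $\bm c(t_n)=T\bm d(t_n)$ on each cell. Note that $u_h$ is single-valued across edges because it coincides with the global polynomial $u(\cdot,t_n)$, so flux consistency applies at every SSPRK3 stage.

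First, Lemma~\ref{lem:TPEFullDis} gives $\widetilde{\bm c}(t_{n+1})=T\bm d(t_{n+1})$. This means $\bm\phi^T(t_n)\widetilde{\bm c}(t_{n+1})=\bm m^T\bm d(t_{n+1})=u(\cdot,t_{n+1})$, where the frozen basis $\bm\phi(t_n)$ is viewed as a polynomial on $I(t_{n+1})$. We need the intermediate stages to stay single-valued polynomials so that Lemma~\ref{lem:TPESemiDis} can be applied repeatedly. This holds because each stage is $T\bm d^{(j)}$ with $\bm d^{(j)}$ a global vector independent of the cell, so the stage function is the same global polynomial on every cell. The basis transformation \eqref{eq:basis_update} then yields $\bm c(t_{n+1})$ satisfying $\bm\phi^T(t_{n+1})\bm c(t_{n+1})=u(\cdot,t_{n+1})$, since the transformation leaves the represented polynomial unchanged.

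Next, we show $\lambda_I=0$. The provisional total is
\[
\widetilde{\mathcal U}_I(t_{n+1})=\bm p_I^T(t_{n+1})\bm c(t_{n+1})=\mathcal Q_{I(t_{n+1})}\bigl(u(\cdot,t_{n+1})\bigr).
\]
We must check that volume quadrature is exact for this $\mathbb P^2$ integrand. On triangles the pulled-back integrand is in $\mathbb P^2$. On quadrilaterals it is in $\mathbb Q^2$ times the $\mathbb Q^1$ Jacobian, hence in $\mathbb Q^3\subset\mathbb Q^5$. So the quadrature is exact and $\widetilde{\mathcal U}_I(t_{n+1})=\int_{I(t_{n+1})}u(\cdot,t_{n+1})\,d\bm x$. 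By hypothesis $\mathcal U_I^\star(t_n)$ is the exact integral, so Lemma~\ref{lem:TPEAuxCorr} gives $\mathcal U_I^\star(t_{n+1})=\int_{I(t_{n+1})}u(\cdot,t_{n+1})\,d\bm x$. Lemma~\ref{lem:TPEAuxCorr} relies on the Simpson states of Lemma~\ref{lem:TPEFullDis} reproducing the exact solution at $t_n$, $t_{n+\frac12}$ and $t_{n+1}$. The numerator of $\lambda_I$ therefore vanishes. Because $\bm p_I^T\bm\alpha=|I(t_{n+1})|>0$, this gives $\lambda_I=0$, so $\bm c_{Cor}(t_{n+1})=\bm c(t_{n+1})$ and $u_h^{Cor}(\cdot,t_{n+1})=u(\cdot,t_{n+1})$.

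The main obstacle is bookkeeping rather than new analysis. One point is ensuring single-valuedness of the stage solutions, so that the flux-consistency step inside Lemma~\ref{lem:TPESemiDis} is legitimate at every stage. The other is confirming exactness of $\bm p_I$ on bilinear quadrilaterals, where the Jacobian is non-constant. I would state both explicitly and otherwise cite the lemmas. Finally, since the inductive hypothesis $\mathcal U_I^\star(t_n)=\int u_h$ is restored at each step by construction, the property propagates to all time steps. Hence $\mathcal D_{h,I}^n q=0$ for every $q\in\mathbb P^2$, which is TPE(2) in the sense of Definition~\ref{def:TPE}.
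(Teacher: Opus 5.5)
Your proposal is correct and follows essentially the same route as the paper. Lemmas~\ref{lem:TPESemiDis} and~\ref{lem:TPEFullDis} plus the basis transformation make the provisional solution exact, Lemma~\ref{lem:TPEAuxCorr} then makes the numerator of $\lambda_I$ vanish so the correction is inactive, and your added checks (stage solutions remaining single-valued global polynomials, exactness of the volume quadrature defining $\bm p_I$ on bilinear quadrilaterals) are correct details that the paper leaves implicit.
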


\begin{proof}
	By Lemmas~\ref{lem:TPESemiDis} and \ref{lem:TPEFullDis} and the basis transformation, $\bm\phi^T(t_{n+1})\bm c(t_{n+1})=u(\bm x,t_{n+1})$. Hence, by Lemma~\ref{lem:TPEAuxCorr}, $	\widetilde{\mathcal U}_I(t_{n+1}) =\int_{I(t_{n+1})}ud\bm x =\mathcal U_I^\star(t_{n+1}).$ Therefore, $\lambda_I=0$, and \eqref{coefConservedUpdate} gives $	\bm c_{Cor}(t_{n+1})=\bm c(t_{n+1}).$ Consequently,
	\[
	\begin{aligned}
		u_h^{Cor}(\cdot,t_{n+1}) =\bm\phi^T(t_{n+1})\bm c_{Cor}(t_{n+1})=\bm\phi^T(t_{n+1})\bm c(t_{n+1})=u(\cdot,t_{n+1}).
	\end{aligned}
	\]
	Thus, the fully discrete CE-ALE-DG scheme exactly transports every
	polynomial in $\mathbb P^2$, which proves the TPE($2$) property.
\end{proof}
\begin{remark}
	It is worth noting that the CE-ALE-DG scheme is not restricted to the third order. Such an extension requires the following modifications. First, the DG approximation space is extended from $\mathbb P^2$ to $\mathbb P^k$ $(k\geq 3)$. Then, the SSPRK3 method is replaced by a higher-order Runge--Kutta method. Finally, Simpson's rule and the spatial quadrature are replaced by rules of sufficient accuracy. The form of the conservative correction remains unchanged; with compatible higher-order temporal and spatial quadrature, it also vanishes for exactly transported polynomials.
\end{remark}

Taken together, these results resolve the main compatibility challenge between strict conservation and exact polynomial transport. Under the stated mesh-motion and quadrature conditions, the fully discrete CE-ALE-DG scheme is locally and globally conservative while preserving the GCL and TPE($2$) on both triangular and quadrilateral moving meshes.
\subsection{Discontinuity treatment}
To suppress spurious oscillations near discontinuities, we combine the KXRCF troubled-cell indicator~\cite{krivodonova2004shock} with the oscillation-eliminating (OE) procedure~\cite{peng2025oedg}. At each SSPRK3 stage, troubled cells are detected by the KXRCF indicator and treated with the OE procedure. To verify that this stagewise treatment does not destroy the TPE($k$) property established above, we first introduce the notion of a $k$-exact limiter.
\begin{Def}[$k$-exact limiter]
	Let $\Lambda_h$ be a limiter acting on the DG space. For a global polynomial $p\in\mathbb P^k$, let $u_h^p$ denote its DG representation determined by
	\[
	\int_{I(t)} u_h^p q\,d\bm x
	=
	\int_{I(t)} p q\,d\bm x,
	\quad
	\forall q\in\mathbb P^k(I(t)),\quad
	\forall I(t)\in\mathcal T_h(t).
	\]
	The limiter $\Lambda_h$ is called $k$-exact if $	\Lambda_h(u_h^p)=u_h^p, \forall p\in\mathbb P^k.$
	\end{Def}
\begin{proposition} 
	Assume that the unlimited Runge--Kutta DG scheme satisfies TPE($k$) and that all stage solutions remain  in $\mathbb P^k$. If $\Lambda_h$ is a $k$-exact limiter, then applying $\Lambda_h$ after each stage preserves TPE($k$).
\end{proposition}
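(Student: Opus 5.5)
The plan is an induction over the Runge--Kutta stages. The claim to carry through the stages is that, when the limiter is inserted, every stage solution produced for a polynomial initial state equals the DG representation of an exactly transported polynomial, or of a polynomial combination of such. For linear advection with initial data $u_h(t_n)=u_h^{p_0}$, $p_0\in\mathbb P^k$, consider first the unlimited scheme. By hypothesis it satisfies TPE($k$), and its stage solutions remain in $\mathbb P^k$. Concretely, in the CE-ALE-DG setting Lemma~\ref{lem:TPEFullDis} gives stage coefficients $\bm c(t^{(s)})=T\bm d^{(s)}$. So each unlimited stage value is the restriction of a single global polynomial $p_s=\bm m^T\bm d^{(s)}\in\mathbb P^k$ to every cell. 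Since $p_s$ lies in $\mathbb P^k(I(t))$ on each cell, the local $L^2$ projection reproduces it, and the stage value is exactly $u_h^{p_s}$.

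The induction runs as follows. Denote by $w^{(s)}$ the stage values of the limited scheme and by $v^{(s)}$ those of the unlimited scheme. For the base case, $w^{(0)}=v^{(0)}=u_h^{p_0}$, and $k$-exactness gives $\Lambda_h(u_h^{p_0})=u_h^{p_0}$. For the inductive step, assume $w^{(j)}=v^{(j)}$ for all $j<s$. The stage map (a convex combination of previous stages plus $\Delta t\,\mathcal R_{I,0}$ evaluated on previous stages) is the same operator in both schemes, so the pre-limiting value of stage $s$ coincides with $v^{(s)}=u_h^{p_s}$. Applying $k$-exactness again gives $w^{(s)}=\Lambda_h(u_h^{p_s})=u_h^{p_s}=v^{(s)}$. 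Since the limiter is applied only at the stages, the final provisional coefficients coincide, and so do the Simpson states $\bm c^E$, which are built linearly from the stages. The basis transformation and the conservative correction then act on identical data, so $\lambda_I$ is unchanged and equals zero by Lemma~\ref{lem:TPEAuxCorr} and the TPE($2$) proposition. Hence $\mathcal D_{h,I}^n q=0$ holds for the limited scheme as well.

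The main obstacle is the identification of each unlimited stage value with $u_h^{p_s}$ for a \emph{global} polynomial $p_s$, as opposed to a cellwise polynomial. The definition of a $k$-exact limiter only covers representations of global polynomials. Since limiters such as KXRCF/OE use neighbouring data, agreement on a single cell is not enough. I would settle this by invoking the hypothesis that the stage solutions remain in $\mathbb P^k$, understood globally. In the CE-ALE-DG case it follows concretely because $T\bm d^{(s)}$ represents the same $\bm m^T\bm d^{(s)}$ on every cell, with $\bm d^{(s)}$ computed from $A$ and $\Delta t$ independently of $I$. A second, minor point is that the limiter acts at a stage geometry time $t^{(s)}$; the definition of $u_h^p$ is applied on $\mathcal T_h(t^{(s)})$, and the frozen basis does not affect this because it spans the same $\mathbb P^k(I(t))$.
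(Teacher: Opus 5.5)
Your argument is correct and follows the paper's proof. The unlimited stage values are global polynomials in $\mathbb P^k$, so the $k$-exact limiter leaves them unchanged, the limited and unlimited updates coincide, and TPE($k$) is inherited. Your stage-by-stage induction, and your point that the hypothesis must be read globally so that the $k$-exactness definition applies to neighbour-dependent limiters such as KXRCF--OE, spell out what the paper's two-line proof leaves implicit.
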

\begin{proof} 
	Since all stage solutions lie in $\mathbb P^k$, they are unchanged by the $k$-exact limiter. Hence, the limited and unlimited updates coincide, and the limited scheme inherits TPE($k$).
\end{proof}
\begin{remark}
	Total variation bounded (TVB) \cite{cockburn1989tvb}, and weighted essentially non-oscillatory (WENO) limiters \cite{qiu2005weno} do not generally guarantee $k$-exactness, whereas the oscillation-free (OF) \cite{lu2021ofdg} and OE procedures \cite{peng2025oedg} are $k$-exact. Since KXRCF is inactive and OE acts as the identity on $\mathbb P^2$, the KXRCF--OE limiter is $2$-exact and therefore preserves TPE($2$).
\end{remark}

For clarity, Figures~\ref{fig:KXRCF-OE}
and~\ref{fig:scheme-summary} summarize the KXRCF--OE procedure and the complete CE-ALE-DG scheme, respectively.
\begin{figure}[htbp]
	\centering
	\begin{minipage}[b]{0.39\textwidth}
		\centering
		\begin{tikzpicture}[
			yscale=1.2,
			every node/.style={font=\normalsize},
			box/.style={
				rectangle,
				draw,
				rounded corners,
				align=center,
				minimum width=2.45cm,
				minimum height=0.60cm,
				font=\normalsize
			},
			decision/.style={
				diamond,
				draw,
				aspect=1.9,
				align=center,
				inner sep=1pt,
				font=\normalsize
			},
			arrow/.style={->, thick}
			]
			\node[box] (stage) at (0,0)
			{SSPRK3-stage solution};
			\node[box] (indicator) at (0,-0.80)
			{KXRCF indicator};
			\node[decision] (troubled) at (0,-1.95)
			{Troubled cell?};
			\node[box] (oe) at (0,-3.10)
			{OE procedure};
			\node[box] (output) at (0,-3.90)
			{Stage coefficients};
			
			\draw[arrow] (stage) -- (indicator);
			\draw[arrow] (indicator) -- (troubled);
			\draw[arrow] (troubled.south)
			-- node[right] {yes} (oe.north);
			\draw[arrow] (oe) -- (output);
			\draw[arrow] (troubled.west)
			-- node[above,inner sep=1pt] {no}
			++(-1.2,0)
			|- (output.west);
		\end{tikzpicture}
		\caption{The KXRCF-OE procedure at one SSPRK3 stage.}
		\label{fig:KXRCF-OE}
	\end{minipage}
	\hfill
	\begin{minipage}[b]{0.58\textwidth}
		\centering
		\begin{tikzpicture}[
			xscale=1.1,
			yscale=1.1,
			every node/.style={font=\normalsize},
			box/.style={
				rectangle,
				draw,
				rounded corners,
				align=center,
				minimum width=2.1cm,
				minimum height=0.65cm,
				font=\normalsize
			},
			arrow/.style={->, thick}
			]
			\def\radius{2.35cm}
			
			\node[box] (start) at (90:\radius)
			{Solution at $t_n$};
			
			\node[box] (basis) at (18:\radius)
			{Freeze basis};
			
			\node[box] (rk) at (-52:\radius)
			{SSPRK3 with\\KXRCF-OE \\procedure};
			
			\node[box] (correction) at (-126:\radius)
			{Basis update};
			
			\node[box] (update) at (162:\radius)
			{Conservative\\correction};
			
			\node[align=center,font=\normalsize\bfseries] at (0,0)
			{CE-ALE-DG\\scheme};
			
			\draw[arrow] (start) -- (basis);
			\draw[arrow] (basis) -- (rk);
			\draw[arrow] (rk) -- (correction);
			\draw[arrow] (correction) -- (update);
			\draw[arrow,dashed] (update) --
			node[
			midway,
			above,
			sloped,
			font=\normalsize,
			yshift=-2pt
			]
			{$t_{n+1}$}
			(start);
		\end{tikzpicture}
		\caption{The third-order CE-ALE-DG scheme from
			$t_n$ to $t_{n+1}$.}
		\label{fig:scheme-summary}
	\end{minipage}
	\vspace{-20pt}
\end{figure}
\section{Numerical examples}\label{sec:Num_examples} In this section, we present numerical examples to assess the accuracy, conservation, GCL, and TPE properties of the proposed third-order CE-ALE-DG scheme, together with its robustness for discontinuous problems. Both structured triangular and quadrilateral meshes are considered.

On triangular meshes, the proposed scheme is compared with the ALE-DG scheme developed by Fu et~al.~\cite{fu2019simplex}, hereafter referred to as the ALE-DG-FSX scheme. This scheme is selected as the reference method because it is representative of existing direct ALE-DG formulations and because its error analysis relies on a mesh-independent Lipschitz bound on the mesh velocity. Comparing the two schemes under random mesh motion without imposing this bound provides a direct numerical test of whether TPE($k$) can prevent mesh-motion-induced loss of convergence order. Since the ALE-DG-FSX scheme was originally designed for moving triangular meshes, the comparisons are restricted to triangular meshes. On quadrilateral meshes, only results obtained with the CE-ALE-DG scheme are reported to demonstrate the applicability of the proposed framework to quadrilateral elements.

Following~\cite{li2019high}, we use $\mathrm{Lip}(\bm\mu^g)$ to quantify the spatial variation of the mesh velocity. The large values observed in our experiments, together with their growth under mesh refinement, indicate strong spatial variation and the absence of a mesh-independent derivative bound. The time-step size is determined by
\[
\Delta t_n
=
\frac{C_{\mathrm{CFL}}}{2\alpha_{\max}^n}
\sqrt{\min_{I\in\mathcal T_h(t_n)}|I|},
\]
where $\alpha_{\max}^n$ denotes the maximum ALE characteristic speed over all cell edges, and the CFL number $C_{\mathrm{CFL}}$ is chosen as $\frac{1}{2k+1}$, yielding $C_{\mathrm{CFL}}=0.2$ for the third-order CE-ALE-DG scheme with $k=2$.
\subsection{Mesh motion}
To examine the scheme under mesh motion without a uniform Lipschitz bound on the mesh velocity, we employ the randomly moving mesh introduced in~\cite{cai2026beyond}. Define the characteristic initial mesh size by $h_0:= \sqrt{\min_{I\in\mathcal T_h(0)}|I|}.$ The position of the $i$-th mesh vertex at time $t_n$ is prescribed by
\[
\bm x_i(t_n)
=
\bm x_{i,0}
+t_n\bm T_r
+
\begin{cases}
	h_0C_r\bm N_i^n, & \bm x_i\ \text{is an interior vertex},\\
	\bm 0,           & \bm x_i\ \text{is a boundary vertex},
\end{cases}
\]
where $\bm x_{i,0}$ is the initial vertex position, and $C_r$ controls the magnitude of the random perturbation. The components of $\bm N_i^n\in\mathbb R^2$ are independently and uniformly distributed in $[-0.5,0.5]$ for different interior vertices and time levels. We set $\bm N_i^0=\bm 0$ and use the fixed random seed \texttt{std::mt19937 gen(123456789u)} for reproducibility. In the TPE and accuracy tests, $C_r=0.5$ and $\bm T_r=(-0.48249510,-0.87589867)^T$ are used.

To illustrate how the mesh-velocity Lipschitz constraint suppresses the magnitude of mesh motion, Figure~\ref{fig:mesh_overlay} compares the local meshes obtained with and without this constraint under the same underlying random mesh-velocity field. The constrained mesh remains close to the initial mesh, whereas the unconstrained mesh undergoes visibly larger deformation. The unconstrained case therefore reflects the original random mesh motion without modification by the Lipschitz constraint.
\begin{figure}[htbp]
	\vspace{-10pt}
	\centering
	\captionsetup{skip=2pt}
	\includegraphics[width=1.0\textwidth]
	{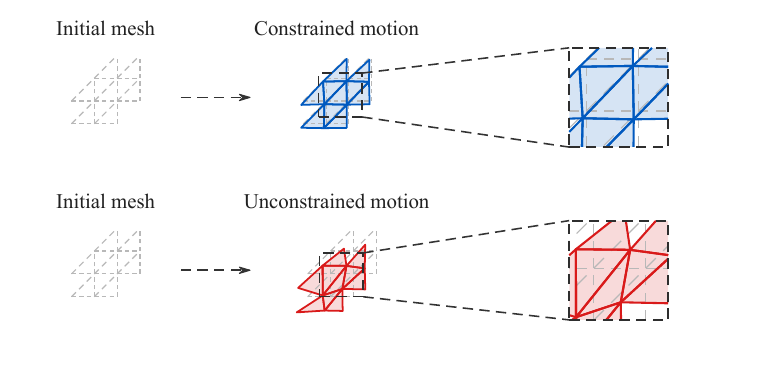}
	\caption{Comparison of local mesh configurations at $t=0.04$ under the same underlying random mesh motion, with and without the mesh-velocity Lipschitz constraint. The gray dashed lines denote the initial mesh, while the blue and red solid lines show the meshes obtained with and without the constraint, respectively. The black dashed boxes and connecting lines indicate the regions shown in the enlarged views.}
	\label{fig:mesh_overlay}
	\vspace{-15pt}
\end{figure}

We also consider a Lagrangian-type mesh velocity to demonstrate the practical applicability of the CE-ALE-DG scheme. The mesh velocity is chosen according to the local fluid velocity~\cite{badwaik2020single,CHENG20071567} and is used in the nonsmooth tests of Section~\ref{sec:nonsmooth_tests}.
\subsection{TPE tests}
We first verify that the CE-ALE-DG scheme satisfies TPE($k$) for $k=0,1,2$ using the linear advection equation with the quadratic DG space $\mathbb V_I^2(t)$. Here, TPE(0) coincides with the GCL. In this test, the flux function is given by $\bm F(u)=\bm a u,$ where $\bm a=(0.43,-0.59)^T$ is a constant advection velocity. The exact solution is $u(x,y,t)=u_0(x-0.43t,y+0.59t).$ The computation is performed on the domain $\Omega(0)=[0,1]\times[0,1]$ up to $t=1$, using $40\times40$ structured triangular and quadrilateral meshes, with exact boundary conditions imposed. The initial condition is chosen as
\[
u_0(x,y)
=
\gamma_0+\gamma_1x+\gamma_2y+\gamma_3x^2+\gamma_4xy+\gamma_5y^2.
\]
For each TPE($k$) test, the coefficients of terms of degree greater
than $k$ are set to zero, while the remaining coefficients are sampled
uniformly from $[-5,5]$. The fixed random seed \texttt{std::mt19937 gen(2026999999u)} is used for reproducibility.
\begin{table}[htbp]
	\centering
	\scriptsize
	\caption{Errors in the TPE($k$) tests for the CE-ALE-DG and ALE-DG-FSX schemes on randomly moving meshes.}
	\label{tab:tpe-test}
	\begin{tabular}{c c c c c}
		\hline
		& Tests & TPE(0) & TPE(1) & TPE(2) \\
		\hline
		CE-ALE-DG & $L^1$ error
		& 1.99e-14 & 1.07e-13 & 2.51e-13 \\
		Tri mesh& $L^\infty$ error
		& 7.06e-14 & 7.57e-13 & 1.71e-12 \\
		\hline
		CE-ALE-DG & $L^1$ error
		& 1.26e-14 & 6.79e-14 & 1.57e-13 \\
		Quad mesh& $L^\infty$ error
		& 4.11e-14 & 4.29e-13 & 1.02e-12 \\
		\hline
		ALE-DG-FSX & $L^1$ error
		& 5.30e-16 & 2.29e-06 & 8.34e-06 \\
		Tri mesh& $L^\infty$ error
		& 3.11e-15 & 1.33e-05 & 8.37e-05 \\
		\hline
	\end{tabular}
\end{table}

Table~\ref{tab:tpe-test} shows that the errors of the CE-ALE-DG scheme remain near the accumulated roundoff level for $k=0,1,2$ on both triangular and quadrilateral meshes, confirming its discrete GCL, TPE(1), and TPE(2) properties. In contrast, on the triangular mesh, the ALE-DG-FSX scheme preserves only TPE(0) but does not preserve TPE(1) or TPE(2). The CE-ALE-DG results are consistent with the analysis in Section~\ref{sec:CE-ALE-DG}.
\subsection{Accuracy tests}
In this subsection, we examine the accuracy and conservation properties of the CE-ALE-DG scheme under random mesh motion without an imposed mesh-velocity Lipschitz constraint. Two problems are considered: Burgers' equation and the compressible Euler equations. For both problems, CE-ALE-DG results are reported on structured triangular and quadrilateral meshes, while comparisons with the ALE-DG-FSX scheme are performed on triangular meshes. Periodic boundary conditions are imposed in all tests.

The CE-ALE-DG and ALE-DG-FSX schemes without the imposed Lipschitz constraint are denoted by CE-NoLip and FSX-NoLip, respectively. Following~\cite{li2019high}, FSX-Lip denotes the ALE-DG-FSX scheme with both the mesh-velocity magnitude and its discrete Lipschitz estimate bounded by $10$.
\subsubsection{Burgers' equation}
For Burgers' equation, the flux function is given by $\bm F(u)=\left(0.5u^2,0.5u^2\right)^T.$ The initial condition is $u_0(x,y)=1+0.5\sin(2\pi(x+y)).$ The computation is performed on the domain
$\Omega(0)=[0,1]\times[0,1]$ up to $t=0.1$.
\begin{table}[htbp]
	\centering
	\caption{Errors and convergence orders for Burgers’ equation.}
	\label{tab:burgersSm-test}
	\scriptsize
	\begin{tabular}{c c c c c c c}
		\hline
		& $N_x \times N_y$ & $20\times20$ & $40\times40$ & $80\times80$ & $160\times160$ & $320\times320$  \\
		\hline
		& $L^1$ error
		& 1.7183e-03 & 2.0574e-04 & 2.0974e-05
		& 2.3264e-06 & 2.7190e-07 \\
		& Order
		& -- & 3.06 & 3.29 & 3.17 & 3.10 \\
		CE-NoLip & $L^\infty$ error
		& 2.5905e-02 & 4.8934e-03 & 5.7161e-04
		& 7.6291e-05 & 1.0175e-05 \\
		Tri mesh & Order
		& -- & 2.40 & 3.10 & 2.91 & 2.91 \\
		& $\mathrm{Lip}(\bm\mu^g)$
		& 325.97 & 714.76 & 1508.59 & 3233.28 & 6853.40 \\
		\hline
		& $L^1$ error
		& 2.8912e-03 & 3.3381e-04 & 2.8187e-05
		& 2.3747e-06 & 2.3174e-07 \\
		& Order
		& -- & 3.11 & 3.57 & 3.57 & 3.36 \\
		CE-NoLip & $L^\infty$ error
		& 4.1197e-02 & 7.5122e-03 & 1.2272e-03
		& 1.7563e-04 & 2.3022e-05 \\
		Quad mesh & Order
		& -- & 2.46 & 2.61 & 2.80 & 2.93 \\
		& $\mathrm{Lip}(\bm\mu^g)$
		& 386.47 & 796.02 & 1659.58 & 3488.30 & 7954.85 \\
		\hline
		& $L^1$ error
		& 5.9086e-04 & 9.0114e-05 & 1.3066e-05
		& 2.3988e-06 & 7.6287e-07 \\
		& Order
		& -- & 2.71 & 2.79 & 2.45 & 1.65 \\
		FSX-NoLip & $L^\infty$ error
		& 1.1324e-02 & 2.5250e-03 & 4.0524e-04
		& 6.6256e-05 & 1.3287e-05 \\
		Tri mesh & Order
		& -- & 2.16 & 2.64 & 2.61 & 2.32 \\
		& $\mathrm{Lip}(\bm\mu^g)$
		& 325.89 & 714.76 & 1508.58 & 3233.28 & 6853.40 \\
		\hline
		& $L^1$ error
		& 5.8517e-04 & 7.6870e-05 & 9.1679e-06
		& 1.1141e-06 & 1.3827e-07 \\
		& Order
		& -- & 2.93 & 3.07 & 3.04 & 3.01 \\
		FSX-Lip & $L^\infty$ error
		& 9.5871e-03 & 2.2737e-03 & 3.0827e-04
		& 4.1836e-05 & 5.3469e-06 \\
		Tri mesh & Order
		& -- & 2.08 & 2.88 & 2.88 & 2.97 \\
		& $\mathrm{Lip}(\bm\mu^g)$
		& 10.00 & 10.00 & 10.00 & 10.00 & 10.00 \\
		\hline
	\end{tabular}
\end{table}

Table~\ref{tab:burgersSm-test} shows that CE-ALE-DG retains third-order accuracy on both triangular and quadrilateral meshes without a uniform mesh-velocity Lipschitz bound. The ALE-DG-FSX scheme loses this accuracy on triangular meshes but recovers it when the Lipschitz constraint is imposed, demonstrating the advantage of TPE-based coefficient evolution.
\subsubsection{Compressible Euler equations}
We next consider the two-dimensional compressible Euler equations $\bm u_t+\nabla\cdot\bm F(\bm u)=0,$
where $\bm u=(\rho,\rho \mu,\rho \nu,E)^T, \bm F(\bm u) =
\left[
\rho\bm \mu,\,
\rho\bm \mu\otimes\bm \mu+p\mathbf{I},\,
(E+p)\bm \mu
\right]^T.$
Here $\rho$ is the density, $\bm \mu=(\mu,\nu)^T$ is the fluid velocity, $E$ is the total energy, and $\mathbf{I}$ is the identity matrix. The pressure is given by $p=(\gamma-1)\left(E-\frac12\rho|\bm \mu|^2\right),$ with $\gamma=1.4$. The smooth initial condition is chosen as
{
	\setlength{\abovedisplayskip}{3pt}
	\setlength{\belowdisplayskip}{3pt}
	\[
	\rho(x,y,0)=1+0.2\sin(2\pi(x+y)),\quad
	\mu(x,y,0)=0.7,\quad
	\nu(x,y,0)=0.3,\quad
	p(x,y,0)=1.
	\]
}
The computation is performed on $\Omega(0)=[0,1]\times[0,1]$ up to $t=0.1$.
\begin{table}[htbp]
	\centering
	\caption{Density errors and convergence orders for the compressible Euler equations.}
	\label{tab:EulerSm-test}
	\scriptsize
	\begin{tabular}{c c c c c c c}
		\hline
		& $N_x \times N_y$ & $20\times20$ & $40\times40$ & $80\times80$ & $160\times160$ & $320\times320$  \\
		\hline
		& $L^1$ error
		& 1.1610e-04 & 1.5537e-05 & 1.9570e-06
		& 2.4530e-07 & 3.0866e-08 \\
		& Order
		& -- & 2.90 & 2.99 & 3.00 & 2.99 \\
		CE-NoLip
		& $L^\infty$ error
		& 3.7034e-04 & 5.9018e-05 & 7.5821e-06
		& 1.0270e-06 & 1.3139e-07 \\
		Tri mesh
		& Order
		& -- & 2.65 & 2.96 & 2.88 & 2.97 \\
		& $\mathrm{Lip}(\bm{\mu}^g)$
		& 393.64 & 865.21 & 1840.90 & 3924.44 & 8234.84 \\
		\hline
		& $L^1$ error
		& 8.4975e-05 & 1.0316e-05 & 1.2500e-06
		& 1.5383e-07 & 1.9162e-08 \\
		& Order
		& -- & 3.04 & 3.04 & 3.02 & 3.01 \\
		CE-NoLip & $L^\infty$ error
		& 6.1886e-04 & 9.4864e-05 & 1.3222e-05
		& 1.6321e-06 & 2.1493e-07 \\
		Quad mesh & Order
		& -- & 2.71 & 2.84 & 3.02 & 2.92 \\
		& $\mathrm{Lip}(\bm\mu^g)$
		& 446.62 & 917.24 & 1933.18 & 4076.47 & 8941.29 \\
		\hline
		& $L^1$ error
		& 1.0010e-04 & 1.3405e-05 & 2.4963e-06
		& 9.6498e-07 & 4.3683e-07 \\
		& Order
		& -- & 2.90 & 2.42 & 1.37 & 1.14 \\
		FSX-NoLip & $L^\infty$ error
		& 3.4724e-04 & 5.5261e-05 & 1.1044e-05
		& 3.7097e-06 & 1.7857e-06 \\
		Tri mesh & Order
		& -- & 2.65 & 2.32 & 1.57 & 1.05 \\
		& $\mathrm{Lip}(\bm\mu^g)$
		& 393.63 & 865.21 & 1840.90 & 3924.44 & 8234.84 \\
		\hline
		& $L^1$ error
		& 8.3341e-05 & 1.0702e-05 & 1.3046e-06
		& 1.5629e-07 & 1.8834e-08 \\
		& Order
		& -- & 2.96 & 3.04 & 3.06 & 3.05 \\
		FSX-Lip & $L^\infty$ error
		& 2.3047e-04 & 2.7893e-05 & 3.4212e-06
		& 4.1640e-07 & 5.1193e-08 \\
		Tri mesh & Order
		& -- & 3.05 & 3.03 & 3.04 & 3.02 \\
		& $\mathrm{Lip}(\bm\mu^g)$
		& 10.00 & 10.00 & 10.00 & 10.00 & 10.00 \\
		\hline
	\end{tabular}
\end{table}

Table~\ref{tab:EulerSm-test} shows the same trend: CE-ALE-DG retains third-order accuracy on both meshes without a uniform Lipschitz bound, whereas ALE-DG-FSX on triangular meshes requires the Lipschitz constraint. Together with Table~\ref{tab:burgersSm-test}, these results provide numerical evidence that TPE(2) helps prevent mesh-motion-induced order reduction in nonlinear problems.
\begin{table}[htbp]
	\centering
	\caption{Conservation errors of the CE-ALE-DG scheme
		on triangular and quadrilateral meshes.}
	\label{tab:conservation-error}
	\scriptsize
	\setlength{\tabcolsep}{3pt}
	\begin{tabular}{c c c c c c c}
		\hline
		Equation & Mesh
		& $20\times20$ & $40\times40$ & $80\times80$
		& $160\times160$ & $320\times320$ \\
		\hline
		\multirow{2}{*}{Burgers' equation}
		& Tri
		& 2.4758e-14 & 7.5606e-14 & 6.2172e-14
		& 1.2523e-13 & 1.4404e-12 \\
		\cline{2-7}
		& Quad
		& 6.6613e-15 & 5.2847e-14 & 3.5305e-14
		& 2.0095e-13 & 1.6784e-12 \\
		\hline
		\multirow{2}{*}{Euler equations}
		& Tri
		& 3.5051e-14 & 1.3067e-13 & 1.8818e-13
		& 4.9614e-13 & 1.4895e-12 \\
		\cline{2-7}
		& Quad
		& 3.5018e-14 & 7.1468e-14 & 3.2008e-13
		& 8.6717e-13 & 1.5463e-12 \\
		\hline
	\end{tabular}
\end{table}

Table~\ref{tab:conservation-error} also shows machine-precision conservation errors for both problems on triangular and quadrilateral meshes. For the Euler equations, the reported error is the maximum over the four conserved variables.
\subsection{Nonsmooth tests}\label{sec:nonsmooth_tests} In this subsection, we consider several compressible Euler problems with nonsmooth solutions to assess the numerical robustness of the CE-ALE-DG scheme. The KXRCF-OE procedure is used to suppress spurious oscillations, while a positivity-preserving limiter~\cite{FU2022111600,zhang2010positivity} is applied to maintain positive density and pressure. Throughout this subsection, Ref denotes the reference solution; Rand and Lag correspond to results obtained with random and Lagrangian mesh velocities, and Con and Non-con indicate whether the conservative correction is applied or not, respectively.
\subsubsection{Shu--Osher problem}
We first consider the Shu--Osher problem~\cite{shu1989efficient}, which is posed here in a quasi-one-dimensional setting and solved on two-dimensional moving meshes. For the random mesh velocity, the computational domain is $\Omega(0)=[-5,5]\times[-0.125,0.125]$, using a $400\times10$ structured quadrilateral mesh. For the Lagrangian mesh velocity, we use the enlarged domain $\Omega(0)=[-9.75,5]\times[-0.125,0.125]$ with a $590\times10$ structured quadrilateral mesh, so that the mesh size in the $x$-direction remains the same in both cases. The initial condition is {\setlength{\abovedisplayskip}{3pt}
	\setlength{\belowdisplayskip}{3pt}
	\[
	(\rho,\mu,\nu,p,\gamma)(x,y)=
	\begin{cases}
		(3.857143,2.629369, 0, 10.33333,1.4), & x\leq-4,\\
		(1+0.2\sin(5x), 0, 0, 1,1.4), & x>-4.
	\end{cases}
	\]
}The final time is $t=1.8$. An inflow condition is imposed at the left boundary and an outflow condition at the right boundary, while periodic boundary conditions are used on the top and bottom boundaries.
\begin{figure}[htbp]
	\centering
	\captionsetup{skip=2pt}
	
	\includegraphics[width=0.35\textwidth]
	{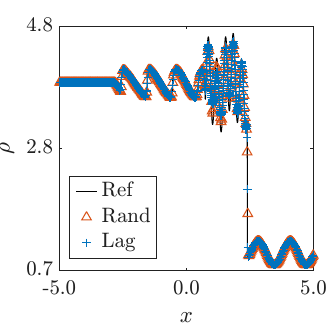}
	\includegraphics[width=0.35\textwidth]
	{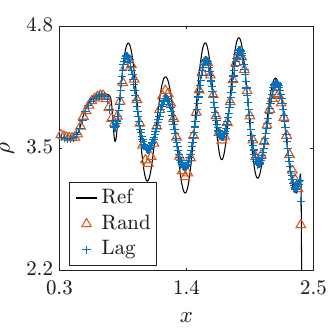}
	\caption{Density figures for the Shu--Osher problem under random (Rand) and Lagrangian (Lag) mesh motions: full domain (left) and zoomed view (right).}
	\label{fig:Shu}
	
	\vspace{-8pt}
	
	\includegraphics[width=0.35\textwidth]
	{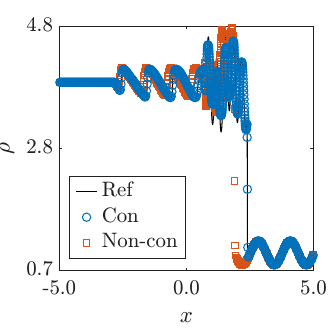}
	\includegraphics[width=0.35\textwidth]
	{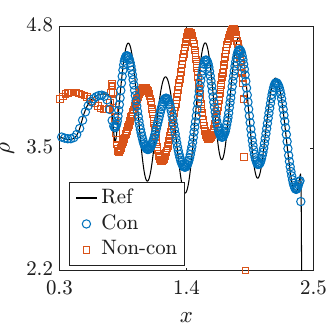}
	\caption{Density figures for the Shu--Osher problem under Lagrangian mesh motion with (Con) and without (Non-con) the conservative correction: full domain (left) and zoomed view (right).}
	\label{fig:ShuCon}
	\vspace{-28pt}
\end{figure}

Figure~\ref{fig:Shu} compares the reference solution with the CE-ALE-DG results obtained using random ($C_r=0.5$ and $\bm T_r=(0,0)^T$) and Lagrangian mesh velocities. Both results capture the
shock location and the oscillatory density waves behind the shock. The zoomed view shows no substantial spurious oscillations in the limited solutions, while the main wave structures remain well preserved. Figure~\ref{fig:ShuCon} illustrates the role of the conservative correction by comparing the CE-ALE-DG result with that obtained when the correction is omitted under the same Lagrangian mesh-motion construction. Without the correction, substantial phase and amplitude errors appear in the post-shock density waves, whereas the CE-ALE-DG result remains closer to the reference solution.
\subsubsection{Woodward--Colella blast wave problem} We next consider the Woodward--Colella blast wave problem~\cite{woodward1984numerical} in a quasi-one-dimensional setting. The computational domain is $\Omega(0)=[0,1]\times[-0.0125,0.0125],$ using a $400\times10$ structured quadrilateral mesh. The initial condition is given by
{\setlength{\abovedisplayskip}{3pt}
	\setlength{\belowdisplayskip}{3pt}
	\[
	(\rho,\mu,\nu,p,\gamma)(x,y)=
	\begin{cases}
		(1, 0, 0, 1000, 1.4), & x<0.1,\\
		(1, 0, 0, 0.01, 1.4), & 0.1\le x<0.9,\\
		(1, 0, 0, 100, 1.4), & x\ge 0.9.
	\end{cases}
	\]
}The final time is $t=0.038$. Reflective boundary conditions are imposed on the left and right boundaries, while periodic boundary conditions are used on the top and bottom boundaries. The Woodward--Colella blast wave problem contains strong shock interactions generated by large pressure jumps and is therefore a challenging test for the robustness of the scheme.
\begin{figure}[htbp]
	\vspace{-10pt}
	\centering
	\includegraphics[width=0.35\textwidth]{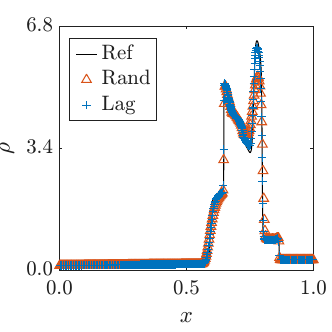}
	\includegraphics[width=0.35\textwidth]{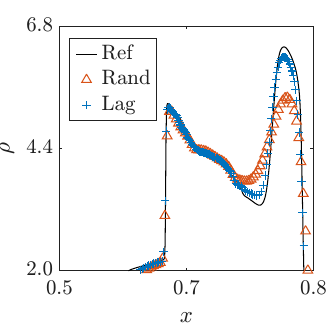}
	\caption{Density figures for the Woodward--Colella blast wave problem under random (Rand) and Lagrangian (Lag) mesh motions: full domain (left) and zoomed view (right).}
	\vspace{-20pt}
	\label{fig:Blast}
\end{figure}

Figure~\ref{fig:Blast} shows that the CE-ALE-DG scheme captures the main density structures under both mesh motions ($C_r=0.5$ and $\bm T_r=(0,0)^T$)  without visible spurious oscillations. In the zoomed view, the Lagrangian mesh velocity provides sharper resolution near the high-density region, and the result with the random mesh velocity remains free of noticeable spurious oscillations.
\subsubsection{Rotated Lax problem}
We then consider a rotated version of the Lax problem~\cite{LiShuQiu2021}. Define the rotated coordinates by $ s=\frac{x-y}{\sqrt{2}}, r=\frac{x+y}{\sqrt{2}}.$ The computational domain is $\Omega(0)=[-0.5,0.5]\times[-0.5,0.5]$, using a $100\times100$ structured quadrilateral mesh. The initial condition is given by {\setlength{\abovedisplayskip}{3pt}
	\setlength{\belowdisplayskip}{3pt}
	\[
	(\rho,\mu,\nu,p,\gamma)(x,y)=
	\begin{cases}
		(0.445, \frac{0.698}{\sqrt{2}}, -\frac{0.698}{\sqrt{2}}, 3.528,1.4), & s\leq0,\\
		(0.5, 0, 0, 0.571,1.4), & s>0.
	\end{cases}
	\]
}The final time is $t=0.16$. Exact boundary data obtained from the corresponding one-dimensional Riemann solution in the $s$-direction are imposed on all physical boundaries, with the velocity components transformed back to Cartesian coordinates.
\begin{figure}[htbp]
	\vspace{-20pt}
	\centering
	\captionsetup{skip=2pt}
	\includegraphics[width=0.35\textwidth]{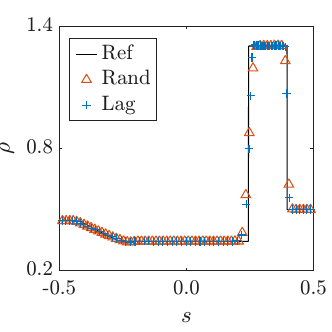}
	\includegraphics[width=0.35\textwidth]{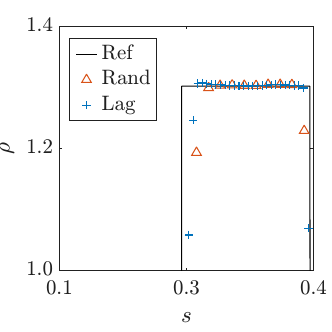}
	\caption{Density figures along $r=0$ for the rotated Lax problem under random (Rand) and Lagrangian (Lag) mesh motions: overall view (left) and zoomed view (right).}
	\label{fig:Lax}
	\vspace{-20pt}
\end{figure}

Figure~\ref{fig:Lax} shows that the CE-ALE-DG scheme captures the main wave structures of the rotated Lax problem under both random ($C_r=0.5$ and $\bm T_r=(0,0)^T$) and Lagrangian mesh motions. The KXRCF-OE procedure effectively controls spurious oscillations near the discontinuities. In the zoomed view, the Lagrangian mesh velocity leads to a mild concentration of mesh points near the discontinuity, which helps improve the local resolution.
\subsubsection{Sedov blast wave problem}
We next consider the Sedov blast wave problem~\cite{liska2003comparison}. The initial condition is prescribed as
{\setlength{\abovedisplayskip}{3pt}
	\setlength{\belowdisplayskip}{3pt}
	\[
	(\rho,\mu,\nu,E,\gamma)(x,y)=
	\begin{cases}
		(1,\ 0,\ 0,\ \dfrac{0.244816}{|\Omega_{\mathrm{ori}}(0)|},1.4),
		& (x,y)\in\Omega_{\mathrm{ori}}(0),\\[1.2ex]
		\left(1,\ 0,\ 0,\ 10^{-12},1.4\right),
		& \text{otherwise},
	\end{cases}
	\]}where $\Omega_{\mathrm{ori}}(0)$ denotes the lower-left cell adjacent to the origin, and $|\Omega_{\mathrm{ori}}(0)|$ is its area. The computation is performed on $\Omega(0)=[0,1.2]\times[0,1.2]$ up to
$t=1.0$ using a $40\times40$ structured quadrilateral mesh. Reflective boundary conditions are imposed on the left and bottom boundaries, while outflow conditions are imposed on the right and top boundaries. The random mesh motion uses $C_r=2.0$ and $\bm T_r=(0,0)^T$.
\begin{figure}[htbp]
	\centering
	\includegraphics[width=1.0\textwidth]{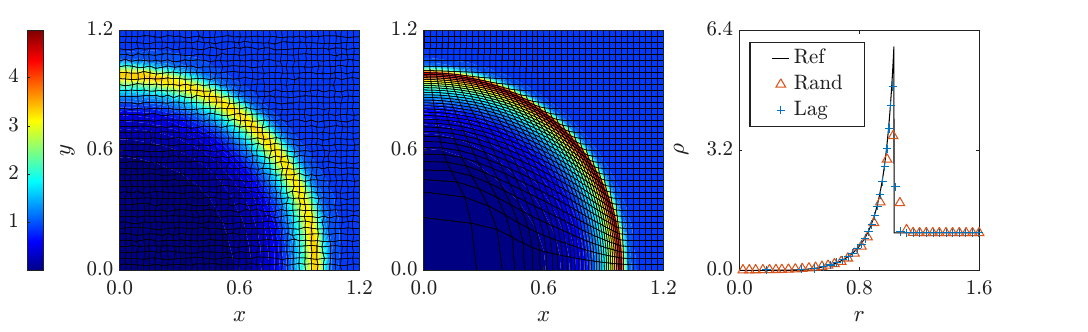}
	\caption{Density distributions for the Sedov blast wave problem under random (left) and Lagrangian (middle) mesh motions, and along $x=y$ (right).}
	\vspace{-20pt}
	\label{fig:Sedov_2D}
\end{figure}

Figure~\ref{fig:Sedov_2D} shows the results of the CE-ALE-DG scheme for this problem under the random and Lagrangian mesh velocities. The Lagrangian mesh motion resolves the high-density region more sharply, while the random-velocity result remains stable. The density along the diagonal $x=y$ also agrees well with the reference solution.
\subsubsection{Double Mach reflection problem}
We finally consider the double Mach reflection problem~\cite{woodward1984numerical}. The computational domain is $\Omega(0)=[0,4]\times[0,1],$ and the final time is $t=0.2$. The initial condition is a Mach 10 shock inclined at an angle of $60^\circ$ to the $x$-axis, given by
{
	\setlength{\abovedisplayskip}{3pt}
	\setlength{\belowdisplayskip}{3pt}
	\[
	(\rho,\mu,\nu,p)(x,y,0)=
	\begin{cases}
		\left(8,\ 8.25\cos\frac{\pi}{6},\ -8.25\sin\frac{\pi}{6},\ 116.5\right),
		& x\le \dfrac{1}{6}+\dfrac{y}{\sqrt{3}},\\[2ex]
		(1.4,\ 0,\ 0,\ 1),
		& x> \dfrac{1}{6}+\dfrac{y}{\sqrt{3}}.
	\end{cases}
	\]}
The left and right boundaries use post-shock and outflow conditions, respectively. On the bottom boundary, the post-shock state is imposed for $0\le x\le 1/6$ and a reflecting condition elsewhere. On the top boundary, the exact moving-shock condition with $x_s(y,t)=\frac{1}{6}+\frac{y+20t}{\sqrt{3}}$ assigns the post-shock state for $x\le x_s(y,t)$ and the pre-shock state otherwise. The domain is discretized using a $960\times240$ structured quadrilateral mesh. The random mesh motion is specified by
$C_r=0.5$ and $\bm T_r=(0,0)^T$.
\begin{figure}[htbp]
	\vspace{-10pt}
	\centering
	\includegraphics[width=0.8\textwidth]{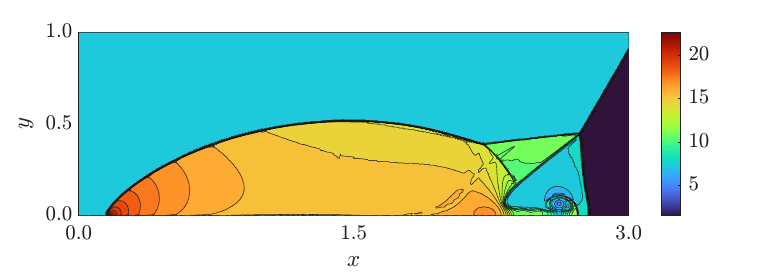}
	\caption{Thirty equally spaced density contours in $[1.5,22.7]$ for the double Mach reflection problem under random mesh motion, shown over $[0,3]\times[0,1]$.}
	\vspace{-20pt}
	\label{fig:DM}
\end{figure}

Figure~\ref{fig:DM} shows the density contours of the double Mach reflection problem under the prescribed random mesh motion. Despite the strong mesh motion, the CE-ALE-DG scheme captures the main shock structures and the small-scale features near the slip line without visible nonphysical oscillations.
\section{Conclusions}\label{sec:Conclusion}
This work addresses the loss of high-order accuracy in direct arbitrary Lagrangian--Eulerian discontinuous Galerkin (ALE-DG) methods when the mesh velocity does not admit a mesh-independent Lipschitz bound. The diagnostic results show that the geometric conservation law (GCL) alone is insufficient to prevent this order reduction. To the best of our knowledge, this is the first use of transport polynomial exactness (TPE($k$)) as a higher-order structural criterion for direct ALE-DG methods, extending exactness from constant states to transported polynomials.

Guided by TPE($k$), we derived a direct coefficient-evolution ALE-DG (CE-ALE-DG) formulation and proved that it satisfies the GCL and TPE($k$) without a mesh-velocity Lipschitz bound. The resulting coefficient update, however, is not conservative. We therefore designed a conservative correction that enforces local and global conservation for general solutions while vanishing for constant states and transported quadratic polynomials. This correction is the key to combining local conservation with the GCL and TPE($2$) in the third-order fully discrete scheme on both triangular and quadrilateral moving meshes.

Numerical experiments confirm the conservation, GCL, TPE($1$), and TPE($2$) properties of the fully discrete scheme on both triangular and quadrilateral meshes. The accuracy tests show that CE-ALE-DG retains third-order accuracy when the mesh-velocity Lipschitz estimate grows as $\mathcal O(h^{-1})$, whereas the reference direct ALE-DG scheme~\cite{fu2019simplex} exhibits order reduction under the same mesh motion, supporting the role of TPE($2$) in preserving high-order accuracy. Comparisons with and without the conservative correction confirm its importance, while strong-shock tests illustrate the robustness of the scheme under random and Lagrangian mesh motions. Future work will extend the framework to higher-order TPE($k$) properties, three-dimensional meshes, and adaptive mesh motion.

\end{document}